\theoremstyle{definition}
\newtheorem{proposition}{Proposition}[]
\newtheorem{lemma}{Lemma}[]
\newtheorem{assumption}{Assumption}[]
\newtheorem{remark}{Remark}[]
\newcommand{\xz}{X^{\zeta}}
\newcommand{\R}{R_{\lambda}}
\title[Ergodic control of diffusions]{Ergodic control of diffusions with random intervention times}
\date{\today}
\author{Jukka Lempa}
\address{Jukka Lempa, Department of Mathematics and Statistics, University of Turku, FI - 20014 Turun Yliopisto, Finland, \texttt{jumile@utu.fi}}
\author{Harto Saarinen}
\address{Harto Saarinen, Department of Mathematics and Statistics, University of Turku, FI - 20014 Turun Yliopisto, Finland, \texttt{hoasaa@utu.fi}}
\begin{document}

\begingroup
\let\newpage\relax%
\maketitle
\endgroup

\begin{abstract}
We study an ergodic singular control problem with constraint of a regular one-dimensional linear diffusion. The constraint allows the agent to control the diffusion only at jump times of independent Poisson process. Under relatively weak assumptions, we characterize the optimal solution as impulse type control policy, where it is optimal to exert the exact amount of control needed to push the process to a unique threshold. Moreover, we discuss the connection of the present problem to ergodic singular control problems, and finally, illustrate the results with different well-known cost and diffusion structures.
\end{abstract}

\noindent \emph{Keywords:} Bounded variation control, Ergodic control, Diffusion process, Resolvent operator, Poisson process, Singular stochastic control

\vspace{10pt}

\noindent \textup{2010} \textit{Mathematics Subject Classification}: \textup{49N25, 49L20, 60J60}

\tableofcontents

\section{Introduction}

In many biological and economical control problems, the decision maker is faced with the situation where the information of the evolving system is not available all the time. Instead, the decision maker might observe the state of the system only at discrete times, for example daily or weekly. Thus, in the following, we model these times when the controller receives the information of the evolving system as jump times of a Poisson process with a parameter $\lambda$. It is assumed that the decision maker can only exert the control at these exogenously given times, in other words, he can not act in the dark. Also, we restrict ourselves to controls of impulse type. Whenever the control is used, the decision maker has to pay a cost which is directly proportional to the size of the impulse.  Otherwise, when there are no interventions, we assume that the system evolves according to one-dimensional linear diffusion $X$ that is independent of the Poisson process. In literature these types of restriction processes on the controllability of $X$ are often referred as \emph{constraints} or \emph{signals}, see \cite{Lempa2012, Lempa2014, MenaldiRobin2016, MenaldiRobin2017, MenaldiRobin2018}.

In the classical case, the decision maker has continuous and complete information, and hence controlling is allowed whenever the decision maker wishes. The objective criterion to be minimized is often either a discounted cost or an ergodic cost (average cost per unit time). Both the discounted cost problems and ergodic problems have been studied in the literature, but the ergodic problems have gotten a bit less attention. This is because they are often mathematically more involved. However, from point of view of many applications, this is a bit surprising, as the discounting factor is often very hard or impossible to be estimated. Also, outside of financial applications the discounting factor might not have a very clear interpretation. 

The simplest case in the classical setting is the one where controlling is assumed to be costless. As a result, the optimal policy is often a local time of $X$ at the corresponding boundaries, see \cite{Alvarez1999, Matomaki2012} for discounted problems and \cite{AlvarezHening2019, Alvarez2019, JackZervos2006} for ergodic problems. One drawback of this model is that the optimal strategies are often singular with respect to Lebesgue measure, which makes them unappealing for applications. One way to make the model more realistic is to add a fixed transaction cost on the control. Then the optimal policy is often a sequential impulse control where the decision maker chooses a sequence of stopping times $\{ \tau_1, \tau_2, \ldots \}$ to exert the control and corresponding impulse sizes $\{ \zeta_1, \zeta_2, \ldots \}$, see \cite{AlvarezLempa2008, Alvarez2004, HarrisonSellkeTaylor1983}. In addition, it is possible that the flow of information is continuous but imperfect. These type of problems, often referred as \emph{filtering problems}, are also widely studied, see \cite{Fleming1968, Picard1986} and \cite{BainCrisan2009} for a textbook treatment and further references. In this case, the disturbance in the information flow is assumed to be such that the decision maker sees the underlying process all the time, but only observes a noisy version of it. 

As in the model at hand, another possibility is to allow the decision maker to control only at certain discrete exogenously given times. These times can be for example multiples of integers as in \cite{Rogers2001, KushnerDupuis1992} or given by a signal process. Often, as in our model, the times between the arrivals of the signal process are assumed to be exponentially distributed, see \cite{RogersZane2002, Wang2001, Lempa2014}. In \cite{RogersZane2002}, this framework was used as a simple model for liquidity effects in a classical investment optimization problem. Paper \cite{Wang2001} investigates both discounted cost and ergodic cost criterion while tracking a Brownian motion under quadratic cost and \cite{Lempa2014} generalizes the discounted problem to a more general payoff and underlying structure. Related studies in optimal stopping are \cite{DupuisWang2002, Lempa2012}. In \cite{DupuisWang2002} the authors consider a perpetual American call with underlying geometric Brownian motion and in \cite{Lempa2012} the results are generalized to larger class of underlying processes. Studies related to more general signal processes are found in \cite{MenaldiRobin2016, MenaldiRobin2017, MenaldiRobin2018}. In these the signal process can be a general, not necessarily independent, renewal process and the underlying process is a general Markov-Feller process. There are also multiple, less related studies, where an underlying Poisson process brings a different friction to the model, by either affecting the structure of the underlying diffusion \cite{Guo2004, Jiang2008} or the payoff structure \cite{Alvarez2001, Lempa2012B, Lempa2017}.

The main contribution of the paper is that we allow the underlying stochastic process $X$ to follow a general one-dimensional diffusion process and also allow a rather general cost function. This is a substantial generalization of \cite{Wang2001}, where the case of Brownian motion with quadratic cost is considered. We emphasize this in the illustrations in section 5 by explicitly solving multiple examples with different underlying dynamics and cost functions.  These generalizations have not, to our best knowledge, been studied earlier in the literature. Furthermore, we are able to connect the problem to a related problem in optimal ergodic singular control \cite{Alvarez2019}. 

The rest of the paper is organized as follows. In section 2, we define the control problem and proof auxiliary results. In section 3, we first investigate the necessary conditions of optimality by forming the associated free boundary problem, followed by the verification. We connect the problem to a similar problem of singular control in section 4 and then illustrate the results by explicitly solving few examples in section 5. Finally, section 6 concludes our study.

\section{The Control Problem}

\subsection{The underlying dynamics}

Let $(\Omega, \mathcal{F}, \{\mathcal{F}_t\}_{t \geq 0}, \mathbb{P})$ be a filtered probability space which satisfies the usual conditions. We consider an uncontrolled process $X$ defined on $(\Omega, \mathcal{F}, \{\mathcal{F}_t\}_{t \geq 0}, \mathbb{P})$, which evolves in $\mathbb{R}_+$, and is modelled as a solution to regular linear Itô diffusion
\begin{equation*}
dX_t=\mu(X_t)dt+ \sigma(X_t)dW_t, \quad \quad X_0 = x,
\end{equation*} 
where $W_t$ is the Wiener process and the functions $\mu,\sigma: (0,\infty) \to \mathbb{R}$ are continuous and satisfy the condition $\int_{x-\varepsilon}^{x+\varepsilon} \frac{1+|\mu(y)|}{\sigma^2(y)}dy < \infty$. These assumptions guarantee that the diffusion has a unique weak solution (see \cite{KaratzasShreve1991} section 5.5). Even though we consider the case where the process evolves in $\mathbb{R}_+$, we remark that it is done only for notional convenience, and the results would remain the same with obvious changes even if the state space would be replaced with any interval of $\mathbb{R}$.

We define the second-order linear differential operator  $\mathcal{A}$ which represents the infinitesimal generator of the diffusion $X$ as
\begin{equation*}
\mathcal{A} = \mu(x) \frac{d}{dx} + \frac{1}{2} \sigma^2(x)\frac{d^2}{dx^2},
\end{equation*}
and for a given $\lambda > 0$ we respectively denote the increasing and decreasing solutions to the differential equation $(\mathcal{A}-\lambda)f=0$ by $\psi_{\lambda} > 0$ and $\varphi_{\lambda} > 0$.

The differential operator $\lambda-\mathcal{A}$ has an inverse operator called the resolvent $R_{\lambda}$ defined by
\begin{equation*}
(R_{\lambda}f)(x)=\mathbb{E}_x \bigg[ \int_0^{\tau}e^{-\lambda s} f(X_s) ds \bigg]
\end{equation*}
for all $x \in \mathbb{R}_+$, and functions $f \in \mathcal{L}_1^{\lambda}$, where  $\mathcal{L}_1^{\lambda}$ is the set of functions $f$ on $\mathbb{R}_+$ which satisfy the integrability condition $\mathbb{E}_x [ \int_0^{\tau}e^{-\lambda s} |f(X_s)| ds ] < \infty $. Here $\tau$ is the first exit time from $\mathbb{R}_+$, i.e. $\tau = \inf \{ t \geq 0 \mid X_t \not \in \mathbb{R}_+  \}$.
Also define the scale density of the diffusion by
\begin{equation*}
S'(x) = \exp \bigg( - \int^x \frac{2 \mu(z)}{\sigma^2(z)}dz \bigg),
\end{equation*}
which is the (non-constant) solution to the differential equation $\mathcal{A} f=0$, and the speed measure of the diffusion by
\begin{equation*}
m'(x) = \frac{2}{\sigma^2(x)S'(x)}.
\end{equation*} 

It is well known, that the resolvent and the solutions $\psi_{\lambda}$ and $\varphi_{\lambda}$ are connected with the formula
\begin{align} \label{calculation formula for resolvent}
(R_{\lambda}f)(x) & = B^{-1}_{\lambda} \psi_{\lambda}(x)\int_x^{\infty}\varphi_{\lambda}(z)f(z)m'(z)dz  \\
& +  B^{-1}_{\lambda} \varphi_{\lambda}(x)\int_0^{x}\psi_{\lambda}(z)f(z)m'(z)dz, \nonumber
\end{align}
where 
\begin{equation*}
B_{\lambda} = \frac{\psi_{\lambda}'(x)}{S'(x)}\varphi_{\lambda}(x)-\frac{\varphi_{\lambda}'(x)}{S'(x)}\psi_{\lambda}(x)
\end{equation*}
denotes the Wronskian determinant (see \cite{BorodinSalminen} p.19). We remark that the value of $B_{\lambda}$ does not depend on the state variable $x$ because an application of harmonicity properties of $\psi_\lambda$ and $\varphi_\lambda$ yield
$$
\frac{dB_{\lambda}(x)}{dx} = 0.
$$ 
In calculations, it is sometimes also useful to use the identity 
\begin{equation} \label{m integraali skaalana}
\int_x^y \mu(z)m'(z)dz = \frac{1}{S'(y)}-\frac{1}{S'(x)}.
\end{equation}

\subsection{The control problem}

We consider a control problem where the goal is to minimize the average cost per unit time so that the controller is only allowed to control the underlying process at exogenously given times. These times, that the controller is allowed to use the control, are given as the arrival times of independent Poisson process, called the \emph{signal process} or \emph{constraint}, and thus the interarrival times are exponentially distributed.

\begin{assumption} The Poisson process $N_t$ and the controlled process $X_t$ are assumed to be independent, and the process $N_t$ is $\{\mathcal{F}_t\}_{t \geq 0}$-adapted. 
\end{assumption}

More precisely, the set of admissible controls $\mathcal{Z}$ is given by those non-decreasing left-continuous processes $\zeta_{t \geq 0}$ that have the representation
$$
\zeta_t=\int_{[0,t)} \eta_s dN_s,
$$
where $N$ is the signal process and the integrand $\eta$ is $\{\mathcal{F}_t\}_{t \geq 0}$-predictable.  The controlled dynamics are then given by the Itô integral
\begin{equation*}
\xz_t = X_0 + \int_0^{\tau_0^\zeta \wedge t} \mu(\xz_s) ds +  \int_0^{\tau_0^\zeta \wedge t} \sigma(\xz_s)dW_s - \zeta_t, \quad 0 \leq t \leq \tau_0^\zeta,
\end{equation*}
where $\tau_0^\zeta$ is the first exit time of $\xz_t$ from $\mathbb{R}_+$, i.e. $\tau_0^\zeta = \inf \{ t \geq 0 \mid \xz_t \not \in \mathbb{R}_+ \}$.

Define the average cost per unit time or ergodic cost criterion as
\begin{equation*}
J(x,\zeta):= \liminf_{T \to \infty} \frac{1}{T} \mathbb{E}_x \left[ \int_0^{T} (\pi(\xz_s)ds + \gamma d\zeta_s) \right],
\end{equation*}
where $\gamma$ is a given positive constant and $\pi: \mathbb{R}_+ \to \mathbb{R}$ is a function measuring the cost from continuing the process. Now define the value function
\begin{equation} \label{minimoitavan kriteerin arvo}
V(T,x) = \inf_{\zeta \in \mathcal{Z}} \mathbb{E}_x \left[ \int_0^T (\pi (X_s)ds + \gamma d \zeta_s) \right]
\end{equation}
and denote by $\beta$ the minimum average cost. The objective of the control problem is to minimize $J(x, \zeta)$ over all the admissible controls $\zeta \in \mathcal{Z}$ and to find, if possible, the optimal control $\zeta^*$ such that $\beta = \inf_{\zeta \in \mathcal{Z}}  J(x,\zeta) = J(x,\zeta^*)$.

We now define the auxiliary functions $\pi_{\gamma}:\mathbb{R}_+ \to \mathbb{R}$
\begin{equation*}
\pi_{\gamma}(x) = \pi(x)+ \gamma \lambda x
\end{equation*} 
and $\pi_{\mu}:\mathbb{R}_+ \to \mathbb{R}$,
\begin{equation*}
\pi_{\mu}(x) = \pi(x)+\gamma \mu(x).
\end{equation*}
In order for our solution to be well-behaved, we must pose some assumptions which are collected below.
\begin{assumption} \label{assumption 2}
\begin{enumerate}[($i$)] We assume that
\item the lower boundary $0$ and upper boundary $\infty$ are natural,
\item the cost $\pi$ is continuous, non-negative and minimized at $0$,
\item the function $\pi_{\mu}$ and $\text{id}:x \mapsto x$ are in $\mathcal{L}_1^{\lambda}$,
\item there exists a unique state $x^* \in \mathbb{R}_+$ such that $\pi_{\mu}$ is decreasing on $(0, x^*)$ and increasing on $[x^*, \infty)$. Also, $\lim_{x \uparrow \infty} \pi_{\mu}(x) > 0$.
\end{enumerate}
\end{assumption}

The boundaries of the state space are assumed to be natural, which means that, in the absence of interventions, the process can not become infinitely large or infinitely close to zero in finite time. In biological applications these boundary conditions guarantee that the population does not explode or become extinct in the absence of harvesting.  We refer to \cite{BorodinSalminen}, pp.18-20, for more thorough discussion of boundary behaviour of one-dimensional diffusions. Also, it is worth to mention, that no second order properties of $\pi_{\mu}$ are assumed.

In addition, the following limiting and integrability conditions on the scale density and speed measure must be satisfied. These conditions assure the existence of a stationary distribution of the underlying diffusion.
\begin{assumption} \label{assumption 3}
\begin{enumerate}[($i$)] We assume that
\item $m(0,y)=\int_{0}^{y} m'(z)dz <\infty$ and $\int_{0}^{y} \pi_{\mu}(z) m'(z)dz<\infty$ for all $y \in \mathbb{R}_+$.
\item $\lim_{x \downarrow 0} S'(x) = \infty$.
\end{enumerate}
\end{assumption}
\begin{remark}  \label{remark alareunasta}
The conditions of assumption \ref{assumption 3} alone guarantee that the lower boundary $0$ should be either natural or entrance, and hence unattainable. However, in the proof of lemma \ref{L ja H juuret} we must exclude the possibility of entrance to assure that $L(x)$ (defined below) attains also negative values. If we want to include this possibility, we would also have to assume that $\lim_{x \to 0} \pi_{\mu}(x) = \infty$, see the proof of lemma \ref{L ja H juuret}.
\end{remark}

\subsection{Auxiliary results}
Define the auxiliary functions $L: \mathbb{R_+} \to \mathbb{R}$ and $H: \mathbb{R_+} \to \mathbb{R}$ as
\begin{align*}
& L(x)= \lambda \int_x^\infty \pi_{\mu}(z)\varphi_{\lambda}(z)m'(z)dz + \frac{\varphi_\lambda'(x)}{ S'(x)}\pi_\mu(x), \\
& H(0,x) =  \int_{0}^x \pi_{\mu}(z)m'(z)dz - \pi_{\mu}(x)m(0,x).
\end{align*}
These functions will offer a convenient representation of the optimality equation in the section 3, and thus, their properties play a key role when determining the optimal control policy.
\begin{lemma} \label{L ja H juuret} Under the assumption 1, the functions $L(x)$ and $H(0,x)$ satisfy the following conditions: There exists a unique $\tilde{x} < x^*$ and a unique $\hat{x} > x^*$ such that
\begin{enumerate}[($i$)]
\item $L(x) \, \substack{< \\ = \\ >} \, 0$ when $x \, \substack{< \\ = \\ >} \, \tilde{x}$,
\item $H(0,x) \, \substack{< \\ = \\ >} \, 0$ when $x \, \substack{> \\ = \\ <} \, \hat{x}$.
\end{enumerate}
\end{lemma}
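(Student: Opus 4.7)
The plan is to show that both $L$ and $H(0,\cdot)$ are unimodal with maximum at $x^*$, then pin down their unique sign changes using the boundary values.

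For $L$, I would differentiate term by term and use the $\lambda$-harmonicity of $\varphi_\lambda$, expressed as $\tfrac{d}{dx}\bigl(\varphi_\lambda'/S'\bigr)=\lambda\varphi_\lambda m'$, to see that the two $\lambda$-contributions cancel and leave
\[
L'(x) = \frac{\varphi_\lambda'(x)}{S'(x)}\,\pi_\mu'(x).
\]
Since $\varphi_\lambda$ is strictly decreasing, $\varphi_\lambda'/S'<0$, and Assumption \ref{assumption 2}(iv) then forces $L'>0$ on $(0,x^*)$ and $L'<0$ on $(x^*,\infty)$, so $L$ is unimodal with peak at $x^*$. Integrating the harmonicity identity from $x$ to $\infty$ and dropping the boundary term at $\infty$ by naturalness yields $\varphi_\lambda'(x)/S'(x)=-\lambda\int_x^\infty\varphi_\lambda(z)m'(z)dz$, and substituting back into $L$ gives the clean representation
\[
L(x) = \lambda\int_x^\infty\bigl[\pi_\mu(z)-\pi_\mu(x)\bigr]\varphi_\lambda(z)m'(z)dz,
\]
from which $L(x)>0$ throughout $[x^*,\infty)$ is immediate by Assumption \ref{assumption 2}(iv). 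The behavior as $x\downarrow 0$ then comes from naturalness of the lower boundary (as opposed to entrance, cf.\ Remark \ref{remark alareunasta}): this gives $\varphi_\lambda'(x)/S'(x)\to-\infty$, and combined with $\pi_\mu(0+)>0$ forces $L(0+)=-\infty$. Together with unimodality, this yields a unique $\tilde x\in(0,x^*)$ realizing the stated sign pattern.

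For $H(0,\cdot)$, direct differentiation gives the simpler identity
\[
H_x(0,x) = -\pi_\mu'(x)\,m(0,x),
\]
so $H(0,\cdot)$ is again unimodal with maximum at $x^*$. Because $H(0,0)=0$, we get $H(0,x)>0$ on $(0,x^*]$. Rewriting $H(0,x)=\int_0^x\bigl[\pi_\mu(z)-\pi_\mu(x)\bigr]m'(z)dz$ and letting $x\to\infty$, the integrand converges pointwise to $[\pi_\mu(z)-\pi_\mu(\infty)]m'(z)$, which is strictly negative on a neighborhood of the minimizer $x^*$; the integrability conditions in Assumption \ref{assumption 3} then support a dominated-convergence argument to conclude $\lim_{x\to\infty}H(0,x)<0$. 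Unimodality then supplies the unique $\hat x\in(x^*,\infty)$ with the claimed sign pattern.

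The main obstacle is precisely this boundary analysis: the strict negativity $L(0+)<0$ hinges on excluding entrance at $0$ (exactly the point of Remark \ref{remark alareunasta}), and $\lim_{x\to\infty}H(0,x)<0$ must be extracted from the monotonicity and integrability assumptions on $\pi_\mu$ and $m$ alone, with no help from any second-order information on $\pi_\mu$.
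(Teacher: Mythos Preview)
Your strategy—show $L$ and $H(0,\cdot)$ are unimodal with extremum at $x^*$, then pin down the unique zero via boundary signs—is exactly the paper's. There are, however, two gaps in the execution.

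First, computing $L'(x)=(\varphi_\lambda'/S')\,\pi_\mu'$ and $H_x(0,x)=-\pi_\mu'\,m(0,x)$ presupposes $\pi_\mu\in C^1$, which the standing assumptions do not grant: $\pi$ is only continuous, and Assumption~\ref{assumption 2}(iv) asserts piecewise monotonicity of $\pi_\mu$, not differentiability. The paper sidesteps this by working with increments. For $H$ it shows directly that for $y>x>x^*$,
\[
H(0,y)-H(0,x)<m(0,x)\bigl(\pi_\mu(x)-\pi_\mu(y)\bigr)<0,
\]
with the reverse inequality on $(0,x^*)$, using only the monotonicity of $\pi_\mu$; the same finite-difference device (citing \cite{Lempa2014}) handles $L$. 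Your derivative formulas are the infinitesimal versions of these bounds, so the repair is routine, but as written the step is unjustified.

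Second, your claim $\pi_\mu(0+)>0$ is not warranted: nothing in Assumption~\ref{assumption 2} prevents $\pi_\mu(0+)\le 0$ (e.g.\ $\pi(x)=x^2$, $\mu(x)=-x$ give $\pi_\mu(x)=x^2-\gamma x$ with $\pi_\mu(0+)=0$). Thus the product argument ``$\varphi_\lambda'/S'\to-\infty$ times $\pi_\mu(0+)>0$ forces $L(0+)=-\infty$'' can fail. The paper does not give a self-contained argument here either: it defers the natural-boundary case to \cite{Lempa2014} and only makes the increment bound $L(x)-L(y)\le(\varphi_\lambda'(y)/S'(y))(\pi_\mu(x)-\pi_\mu(y))$ explicit to cover the entrance case under the extra hypothesis $\pi_\mu(0+)=\infty$ of Remark~\ref{remark alareunasta}. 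Your integral representation of $L$ is correct and cleanly gives $L>0$ on $[x^*,\infty)$, but it does not by itself settle the sign near~$0$.
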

\begin{proof}
The proof of the claim on $L$ is similar to that of Lemma 3.3 in \cite{Lempa2014}. However, to show that the results are in accordance with the remark \ref{remark alareunasta},  we need to adjust the argument on finding a point $x_1 < x^*$ such that $L(x_1)<0$. Thus, assume for a while that $\lim_{x \to 0} \pi_{\mu}(x) = \infty$, and that $x^* > y > x$. Then
\begin{align*}
L(x)-L(y) = & \lambda \int_x^y \pi_\mu(z) \varphi_\lambda (z) m'(z) dz \\ & + \bigg[ \pi_{\mu}(x)\frac{\varphi_\lambda'(x)}{S'(x)}-\pi_{\mu}(y)\frac{\varphi_\lambda'(y)}{ S'(y)} \bigg] \\
\leq &  \frac{\varphi_\lambda'(y)}{S'(y)}(\pi_\mu(x)-\pi_\mu(y)),
\end{align*}
which shows that $\lim_{x \to 0} L(x) = - \infty$.

To prove the second part, assume first that $y>x>x^*$. Since the function $\pi_\mu$ is increasing on $(x^*,\infty)$, we see that
\begin{align*}
& H(0,y)-H(0,x) = \\
& \int_{x}^y \pi_\mu(z)m'(z)dz - \pi_\mu(y)m(0, y)+\pi_\mu(x)m(0, x) \\
& < \pi_\mu(y)(m(x,y)-m(0, y)) +\pi_\mu(x)m(0, x) \\
& = m(0, x)(\pi_\mu(x) - \pi_\mu(y)) \\
& <0,
\end{align*}
proving that $H$ is decreasing on $(x^*, \infty)$. It also follows from
$$ 
H(0,y)-H(0,x) < m(0,x)(\pi_\mu(x)-\pi_\mu(y)),
$$ 
that $\lim_{y \to \infty} H(0,y) < 0$. Next assume that $x^* > y > x$. Because $\pi_\mu$ is decreasing on $(0, x^*)$, we find similarly that
\begin{align*}
& H(0,y)-H(0,x) = \\
& \int_{x}^y \pi_\mu(z)m'(z)dz - \pi_\mu(y)m(0, y)+\pi_\mu(x)m(0, x) \\
& > \pi_\mu(y)(m(x,y)-m(0, y)) +\pi_\mu(x)m(0, x) \\
& = m(0, x)(\pi_\mu(x) - \pi_\mu(y)), \\
& > 0,
\end{align*}
implying that $H$ is increasing on $(0, x^*)$. Furthermore, $H$ is positive when $x < x^*$. Hence, by continuity, $H$ has a unique root, which we denote by $\hat{x}$.
\end{proof}

\begin{proposition} \label{uniqueness of the equation}
There exists a unique solution $\bar{x} \in (\tilde{x}, \hat{x})$ to the equation
\begin{equation*}
  S'(x) m(0,x) L(x) = -\varphi_{\lambda}'(x)H(0,x).
\end{equation*}
\end{proposition}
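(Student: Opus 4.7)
My plan is to define $F(x) = S'(x) m(0,x) L(x) + \varphi_{\lambda}'(x) H(0,x)$, so that the equation reads $F(x)=0$, and then establish existence by a sign change at the endpoints and uniqueness by showing that every interior zero of $F$ is simple with $F'>0$.

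For existence, Lemma~\ref{L ja H juuret} gives $L(\tilde{x})=0$, hence $F(\tilde{x}) = \varphi_{\lambda}'(\tilde{x}) H(0,\tilde{x})$; since $\varphi_{\lambda}'<0$ and $H(0,\tilde{x})>0$ (because $\tilde{x}<\hat{x}$), this is strictly negative. Symmetrically, $H(0,\hat{x})=0$ and $F(\hat{x}) = S'(\hat{x}) m(0,\hat{x}) L(\hat{x}) > 0$ since $L(\hat{x})>0$. Continuity of $F$ on $(\tilde{x},\hat{x})$ and the intermediate value theorem then deliver at least one zero in the open interval.

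For uniqueness, I would compute $F'$. The Sturm--Liouville form of $(\mathcal{A}-\lambda)\varphi_{\lambda}=0$ yields the identity $(\varphi_{\lambda}'/S')' = \lambda \varphi_{\lambda} m'$, which collapses the derivative of $L$ to
\[
L'(x) = \frac{\varphi_{\lambda}'(x)}{S'(x)}\pi_{\mu}'(x),
\]
while direct differentiation of $H$ gives $H'(0,x) = -\pi_{\mu}'(x) m(0,x)$. Consequently, the cross terms $S'(x)m(0,x)L'(x)$ and $\varphi_{\lambda}'(x)H'(0,x)$ cancel exactly, eliminating all $\pi_{\mu}'$ contributions from $F'$. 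After substituting $S''(x)=-(2\mu(x)/\sigma^{2}(x))S'(x)$ and $\varphi_{\lambda}''(x)=(2/\sigma^{2}(x))(\lambda \varphi_{\lambda}(x) - \mu(x)\varphi_{\lambda}'(x))$, I expect everything to assemble into
\[
F'(x) = \frac{2}{\sigma^{2}(x)}\Bigl[-\mu(x) F(x) + L(x) + \lambda \varphi_{\lambda}(x) H(0,x)\Bigr].
\]

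At any root $\bar{x}\in(\tilde{x},\hat{x})$, the $F$-term vanishes, and $L(\bar{x})>0$, $H(0,\bar{x})>0$, $\varphi_{\lambda}(\bar{x})>0$ force $F'(\bar{x})>0$. Thus every interior root of $F$ is simple and $F$ strictly crosses from below to above; if a second root $b>a$ in $(\tilde{x},\hat{x})$ existed, then on $(a,b)$ the function $F$ would first become positive just to the right of $a$ and then return to $0$ at $b$, forcing $F'(b)\le 0$ and contradicting the identity. The main obstacle I foresee is the derivative bookkeeping itself: the clean sign argument at zeros hinges on the twin miracle that all $\pi_{\mu}'$ terms cancel and that $F$ itself reappears inside $F'$ via the $-\mu F$ term, which is what spares us from having to prove separate global monotonicity of $L$ or $H(0,\cdot)$ on the whole subinterval.
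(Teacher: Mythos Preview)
Your argument is correct and is a genuinely different (and arguably cleaner) route than the paper's. The paper first rewrites $P(x)=0$ as an equality of two weighted averages,
\[
\frac{\int_{x}^{\infty}\pi_{\mu}\varphi_{\lambda}m'\,dz}{\int_{x}^{\infty}\varphi_{\lambda}m'\,dz}
=\frac{\int_{0}^{x}\pi_{\mu}m'\,dz}{\int_{0}^{x}m'\,dz},
\]
and then compares the derivatives of the two sides at a root, showing that the left side increases strictly faster than the right side there (again because $L>0$ on $(\tilde{x},\hat{x})$). You instead differentiate $F$ itself and obtain the tidy identity $F'=\tfrac{2}{\sigma^{2}}\bigl[-\mu F + L + \lambda\varphi_{\lambda}H(0,\cdot)\bigr]$, from which $F'(\bar{x})>0$ is immediate. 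Both approaches rest on exactly the same structural fact---that $L>0$ and $H(0,\cdot)>0$ on $(\tilde{x},\hat{x})$---but yours avoids the detour through the ratio reformulation and packages the computation into a single closed-form for $F'$; the paper's version, on the other hand, gives a pleasant probabilistic reading of the optimality equation as a balance of conditional averages of $\pi_{\mu}$.

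One small technical caveat: your intermediate formulas for $L'$ and $H'$ use $\pi_{\mu}'$, but the standing assumptions only give continuity of $\pi$ (and indeed one of the illustrations takes $\pi(x)=|x|$). This is easily repaired: expanding $F$ shows that the two pointwise $\pi_{\mu}(x)$ terms cancel, leaving
\[
F(x)=\lambda S'(x)m(0,x)\int_{x}^{\infty}\pi_{\mu}\varphi_{\lambda}m'\,dz+\varphi_{\lambda}'(x)\int_{0}^{x}\pi_{\mu}m'\,dz,
\]
which is differentiable under the stated hypotheses and leads directly to your formula for $F'$ without ever invoking $\pi_{\mu}'$.
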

\begin{proof}
Define the function 
\begin{equation*} 
P(x)= S'(x) m(0,x) L(x) + \varphi_{\lambda}'(x)H(0,x).
\end{equation*}
Assuming that $x_1 > \hat{x} > x^*$, we get by lemma \ref{L ja H juuret} that
\begin{align*} 
P(x_1) = & S'(x_1) m(0,x_1) L(x_1) + \varphi_{\lambda}'(x_1)H(0,x_1) \geq 0.
\end{align*}
Similarly, when $x_2 < \tilde{x} < x^*$ we have that
\begin{align} \label{G positiivinen ennen y^*}
P(x_1) = & S'(x_2) m(0,x_2) L(x_2) + \varphi_{\lambda}'(x_2)H(0,x_2) \leq 0.
\end{align}
By continuity, the function $P(x)$ must have at least one root. We denote one of these roots by $z$. 

To prove that the root $z$ is unique, we first notice that the naturality of the upper boundary implies that (\cite{BorodinSalminen} p.19)
$$
\lim_{x \to \infty} \frac{\varphi_{\lambda}'(x)}{S'(x)}=0.
$$
Hence 
\begin{equation} \label{natural boundary phi integraali}
-\frac{1}{\lambda} \frac{\varphi_{\lambda}'(y^*)}{S'(y^*)} = \int_{y^*}^{\infty} \varphi_{\lambda}(z)m'(z)dz.
\end{equation}
Thus, we see that the equation $P(x)=0$ is equivalent to 
\begin{equation*}
     \frac{\int_{x}^{\infty} \pi_{\mu}(y) \varphi_{\lambda}(y)m'(y)dy}{\int_{x}^{\infty} \varphi_{\lambda}(y)m'(y)dy} = \frac{\int_0^x \pi_{\mu}(y)m'(y)dy}{\int_0^x m'(y)dy}.
\end{equation*}
Now, differentiating the left-hand side yields
\begin{equation*}
\frac{\varphi_\lambda (x) m'(x) L(x)}{I(x)^2},
\end{equation*}
where $I(x) = \int_{x}^{\infty} \varphi_{\lambda}(y)m'(y)dy$. Differentiating the right-hand side, and evaluating it at $z$, we get by using the equation $P(z)=0$ that
\begin{align*}
& \frac{\pi_{\mu}(z)m'(z)}{m(0,z)}-\frac{\int_0^z \pi_{\mu}(y)m'(y)dy}{m(0,z)}\frac{m'(z)}{m(0,z)}.  \\ 
& = \frac{-m'(z)L(z)}{I(z)m(0,z)} 
\end{align*}
Because $L(y)>0$ in the region $(\tilde{x}, \hat{x})$, and all the other terms are positive everywhere, we find by comparing the derivatives that
\begin{equation*}
    \frac{-m'(z)L(z)}{I(z)m(0,z)} < \frac{\varphi_\lambda (z) m'(z) L(z)}{I(z)^2}.
\end{equation*}
Therefore, by continuity, the intersection between the curves $$I(x)^{-1}\int_{x}^{\infty} \pi_{\mu}(y) \varphi_{\lambda}(y)m'(y)dy \, $$ and $$ \, m(0,x)^{-1}\int_0^x \pi_{\mu}(y)m'(y)dy$$ is unique. This unique point is denoted by $\bar{x}$.
\end{proof}

In the next lemma, we make some further computations that are needed for the sufficient conditions of the control problem. Define the functions $J:\mathbb{R}_+ \to \mathbb{R}$ and $I:\mathbb{R}_+ \to \mathbb{R}$
\begin{align*}
J(x) & = \frac{\gamma - (R_{\lambda} \pi_{\gamma})'(x)}{\varphi'_{\lambda}(x)}, \\
I(x) & = \frac{\int_{0}^x \pi_{\mu}(x) m'(t) dt}{m(0,x)}. 
\end{align*}

\begin{lemma} \label{lemma for variational}
Under the assumption 2:
\begin{enumerate}[($i$)]
    \item $J'(x) \substack{> \\ = \\ <} 0$ when $x \substack{> \\ = \\ <} \tilde{x}$,
    \item $I'(x) \substack{> \\ = \\ <} 0$ when $x \substack{> \\ = \\ <} \hat{x}$.
\end{enumerate}
Here $\tilde{x}$ and $\hat{x}$ are as in lemma \ref{L ja H juuret}.
\end{lemma}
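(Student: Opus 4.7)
The plan is to handle (ii) by a direct application of the quotient rule and to reduce (i) to a one-line corollary of Lemma \ref{L ja H juuret} after an explicit differentiation that exhibits $L$ as an algebraic factor of $J'$. Starting with (ii), differentiating
\begin{equation*}
I(x)=\frac{1}{m(0,x)}\int_{0}^{x}\pi_{\mu}(t)m'(t)\,dt
\end{equation*}
and collecting terms gives
\begin{equation*}
I'(x)=\frac{m'(x)\bigl[\pi_{\mu}(x)m(0,x)-\int_{0}^{x}\pi_{\mu}(t)m'(t)\,dt\bigr]}{m(0,x)^{2}}=-\frac{m'(x)}{m(0,x)^{2}}\,H(0,x).
\end{equation*}
Since $m'$ and $m(0,\cdot)$ are strictly positive, the sign of $I'$ is the opposite of that of $H(0,\cdot)$, so Lemma \ref{L ja H juuret}(ii) immediately yields the claimed pattern around $\hat{x}$.

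For (i) I first reduce $J$ to an expression involving $\pi_{\mu}$ rather than $\pi_{\gamma}$. Applying $R_{\lambda}$ to the identity $(\lambda-\mathcal{A})\,\mathrm{id}=\lambda\,\mathrm{id}-\mu$, which is legitimate by Assumption \ref{assumption 2}(iii), yields $\lambda R_{\lambda}(\mathrm{id})(x)=x+R_{\lambda}\mu(x)$, so that
\begin{equation*}
R_{\lambda}\pi_{\gamma}=R_{\lambda}\pi+\gamma\lambda R_{\lambda}(\mathrm{id})=R_{\lambda}\pi+\gamma\,\mathrm{id}+\gamma R_{\lambda}\mu=R_{\lambda}\pi_{\mu}+\gamma\,\mathrm{id}.
\end{equation*}
Differentiating gives $\gamma-(R_{\lambda}\pi_{\gamma})'(x)=-(R_{\lambda}\pi_{\mu})'(x)$, and hence $J(x)=-(R_{\lambda}\pi_{\mu})'(x)/\varphi_{\lambda}'(x)$.

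Next I differentiate using the explicit formula \eqref{calculation formula for resolvent}. A first differentiation annihilates the endpoint contributions, leaving
\begin{equation*}
(R_{\lambda}\pi_{\mu})'(x)=B_{\lambda}^{-1}\Bigl[\psi_{\lambda}'(x)\int_{x}^{\infty}\varphi_{\lambda}\pi_{\mu}m'\,dz+\varphi_{\lambda}'(x)\int_{0}^{x}\psi_{\lambda}\pi_{\mu}m'\,dz\Bigr].
\end{equation*}
Dividing by $\varphi_{\lambda}'(x)$ and differentiating again, I evaluate $\frac{d}{dx}(\psi_{\lambda}'/\varphi_{\lambda}')$ by substituting $\psi_{\lambda}''=\frac{2}{\sigma^{2}}(\lambda\psi_{\lambda}-\mu\psi_{\lambda}')$ and the analogue for $\varphi_{\lambda}''$ and then invoking $\psi_{\lambda}\varphi_{\lambda}'-\psi_{\lambda}'\varphi_{\lambda}=-B_{\lambda}S'(x)$; the result is $-\lambda B_{\lambda}m'(x)[S'(x)]^{2}/[\varphi_{\lambda}'(x)]^{2}$. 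Collecting this interior term with the boundary term produced by differentiating the two integrals, and using $2/\sigma^{2}=m'S'$, every constant lines up to yield
\begin{equation*}
J'(x)=\frac{m'(x)[S'(x)]^{2}}{[\varphi_{\lambda}'(x)]^{2}}\,L(x).
\end{equation*}
The prefactor is strictly positive, so $J'$ and $L$ have the same sign, and Lemma \ref{L ja H juuret}(i) supplies the stated pattern around $\tilde{x}$.

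The main obstacle is the bookkeeping in this second differentiation: tracking signs in the Wronskian, handling the boundary contributions produced by differentiating the two integrals in \eqref{calculation formula for resolvent} after dividing by $\varphi_{\lambda}'$, and using the ODEs to eliminate the second derivatives of $\psi_{\lambda}$ and $\varphi_{\lambda}$. Every factor collapses cleanly, but a single sign slip would destroy the identification with $L$ and hence the whole argument.
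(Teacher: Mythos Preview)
Your proof is correct and follows essentially the same route as the paper: both parts reduce to the explicit derivative identities $I'(x)=-m'(x)H(0,x)/m(0,x)^{2}$ and $J'(x)=\dfrac{2S'(x)}{\sigma^{2}(x)\varphi_{\lambda}'(x)^{2}}\,L(x)$ (which, via $2/\sigma^{2}=m'S'$, is your $m'(S')^{2}/(\varphi_{\lambda}')^{2}\,L$), after which Lemma~\ref{L ja H juuret} finishes the argument. The paper merely states the $J'$ formula and defers the computation to \cite{Lempa2014}; your version supplies the details, with the preliminary reduction $R_{\lambda}\pi_{\gamma}=R_{\lambda}\pi_{\mu}+\gamma\,\mathrm{id}$ being a tidy way to make the factor $L$ appear directly, but it is the same argument.
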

\begin{proof}
The first claim follows from the formula
\begin{equation*}
J'(x) = \frac{2S'(x)}{\sigma^2(x)\varphi'_{\lambda}(x)^2} L(x)
\end{equation*}
which can be derived using representation (\ref{calculation formula for resolvent}) and straightforward differentiation (see \cite{Lempa2014} lemma 3 for details).
The claim on $I$ follows similarly as differentiation yields
\begin{equation*}
     I'(x) = -\frac{m'(x)}{m^2(0,x)} H(x).
\end{equation*}
\end{proof}

\section{The Solution}

\subsection{Necessary conditions}

Denote the candidate solution for \eqref{minimoitavan kriteerin arvo} as $F(T,x)$. We use the heuristic that $F(T,x)$ can be separated for large $T$ as
\begin{equation}\label{Heuristic}
F(T,x) \sim  \beta T + W(x).
\end{equation}
In mathematical finance literature, the constant $\beta$ usually denotes the minimum average cost per unit time and $W(x)$ is the potential cost function (see \cite{Fleming1999, SethiZhang2005}). The fact that the leading term $\beta T$ is independent of $x$ is, of course, dependent on the ergodic properties of the underlying process. We also note that this heuristic can be used as a separation of variables to solve a partial differential equation of parabolic type related to the expectation in (\ref{minimoitavan kriteerin arvo}) via the Feynman-Kac formula, see \cite{FlemingMcEneaney1995}.

We shall proceed as in \cite{Lempa2014}. We assume that the optimal control policy exists and is given by the following: When the process is below some threshold $y^*$ (called the \textit{waiting region}) we let the process run but if the process is above the threshold value $y^*$ (called the \textit{action region}) and the Poisson process jumps we exert the exact amount of control to push the process back to the boundary $y^*$ and start it anew.

In the waiting region $[0,y^*]$ we expect that the candidate solution satisfies the Bellman's principle  
\begin{equation}
F(T,x)=\mathbb{E}\left[ \int_0^U \pi (X_s)ds + F(T-U,X_U) \right],
\end{equation}
where $U$ is exponentially distributed random variable with mean $1/\lambda$. 

Using the heuristic \eqref{Heuristic} and noticing the connection between the random times $U$ and resolvent, we get by independence and strong Markov property that
\begin{align*}
& \mathbb{E}\left[ \int_0^U \pi (X_s)ds + F(T-U,X_U) \right] \\
 &= \lim_{r \to 0}(R_r \pi)(x) + \mathbb{E}_x [W(X_U)] - \frac{\beta}{\lambda} + \beta T -\mathbb{E}_x\left[ \int_U^{\infty} \pi (X_s)ds \right] \\
  &= \lim_{r \to 0}(R_r \pi)(x) + \lambda (R_{\lambda}W)(x) - \frac{\beta}{\lambda} + \beta T -\mathbb{E}_x\left[ \lim_{r \to 0}(R_r \pi)(X_U) \right] \\
   &= \lim_{r \to 0}(R_r \pi)(x) + \lambda (R_{\lambda}W)(x) - \frac{\beta}{\lambda} + \beta T - \lambda \lim_{r \to 0}(R_{\lambda} R_r \pi)(x).
\end{align*}
Hence, we arrive at the equation
\begin{equation*}
W(x) - \lim_{r \to 0}(R_r \pi)(x) =  \lambda R_{\lambda}(W(x) - \lim_{r \to 0}(R_r\pi)(x)) - \frac{\beta}{\lambda} .
\end{equation*}
We next choose $f(x) = W(x) - \lim_{r \to 0}(R_r \pi)(x)$ in lemma 2.1 of \cite{Lempa2012}, and notice that the lemma remains unchanged even if we add a constant $\beta / \lambda$. Thus, we expect, by our heuristic arguments, that the pair $(W, \beta)$ satisfies the differential equation
\begin{equation*} 
\mathcal{A}W(x) + \pi(x)= \beta.
\end{equation*}
These types of equations often arise in ergodic control problems and there is lots of literature on sufficient conditions for the existence of a solution to these equations, see \cite{Fleming1999, Robin1983, Borkar}. We remark here, that usually these conditions rely heavily on the solution of the corresponding discounted infinite horizon control problems, and thus apply the so-called vanishing discount method. However, in our case we can proceed by explicit calculations.

Next we shall determine the equation for the pair $(W, \beta)$ in the action region $[y^*, \infty]$. The Poisson process jumps in infinitesimal time with probability $\lambda dt$, and in that case the agent has to pay a cost $\gamma (x- y^*) + F(T, y^*)$. On the other hand, the Poisson process does not jump with probability $1- \lambda dt$, and in this case the agent has to pay  $\pi(x) dt + \mathbb{E}_x\left[ F(T-dt,X_{dt}) \right]$. Thus, the candidate function $F$ should satisfy the condition
\begin{align*}
F(T,x) = & \lambda dt(\gamma (x- y^*) + F(T, y^*)) \\ & + (1- \lambda dt)\left( \pi(x) dt + \mathbb{E}_x\left[ F(T-dt,X_{dt}) \right] \right).
\end{align*}
Now using again the heuristic \eqref{Heuristic} and that intuitively $dt^2=0$, we find that
$$
W(x) = \lambda dt (\gamma (x-y^*) + W(y^*)) + \pi(x)dt - \beta dt + (1- \lambda dt)\mathbb{E}_x \left[ W(X_{dt}) \right].
$$
By formally using Dynkin's formula to the last term and simplifying we get
$$
0 =  \lambda dt (\gamma (x-y^*) + W(y^*)) + \pi(x)dt - \beta dt + (\mathcal{A} - \lambda)W(x) dt. 
$$
We conclude that on the action region, the pair $(W, \beta)$ should satisfy the differential equation
\begin{equation*}
 (\mathcal{A} - \lambda)W(x)  = -(\pi(x) + \lambda  (\gamma (x-y^*) + W(y^*))-\beta).   
\end{equation*}
Now, we first observe that 
\begin{equation*}
\mathcal{A}W(x)+\pi(x)-\beta =
    \begin{cases}
    0, & x<y^*,\\
    \lambda(W(x)- \gamma x -(W(y^*)-\gamma y^*)), & x\geq y^*,
    \end{cases}
\end{equation*}
which implies that $W(x)$ satisfies the $C^1$-condition $W'(y^*)=\gamma$. We have thus arrived at the following free boundary problem: Find a function $W(x)$ and constants $y^*$ and $\beta$ such, that
\begin{align}
 W  \in  C^2, \nonumber \\
 W'(y^*)  =  \gamma,  \nonumber \\
 (\mathcal{A} - \lambda)W(x) + \pi(x) + \lambda  (\gamma (x-y^*) + W(y^*))  =  \beta, \, \, \, \, x \in [y^*, \infty), \label{equation above}\\
 \mathcal{A}W(x) + \pi(x)   = \beta,  \, \, \, \,  x \in (0,y^*). \label{equation below}
\end{align}
\begin{remark}
Another common approach to heuristically form the HJB-equation of the problem is to use the value function $J_r(x)$ of the corresponding discounted problem (see \cite{Lempa2014} p.4) and the vanishing discount limits $rJ_r(\bar{x}) \to \beta$ and $W_r(x) = J_r(x)-J_r(\bar{x}) \to W(x)$, where $\bar{x}$ is a fixed point in $\mathbb{R}_+$ (see \cite{Robin1983} p.284 and \cite{Fleming1999} p.427). This argument yields the HJB-equation
\begin{align}\label{HJB}
\mathcal{A}W(x) + \pi(x) - \lambda(W'(x)-\gamma)\mathbbm{1}_{\{ x \in S \}} = \beta,  
\end{align}
where $S = [y^*,\infty)$ is the control region. 
\end{remark}
To solve the free boundary problem, we consider first the equation (\ref{equation below}). In this case we write the differential operator $\mathcal{A}$ as (see \cite{Yor1997} p.285)
$$
\mathcal{A} = \frac{d}{dm(x)} \frac{d}{dS(x)},
$$
which allows us to find that
\begin{equation} \label{DY below}
\frac{d\, W'(x)}{d\,  S'(x)} = (\beta - \pi (x))m'(x).
\end{equation}
Therefore, integrating over the interval $(0, y^*)$ gives
\begin{equation*}
\frac{W'(y^*)}{S'(y^*)} = \beta \int_{0}^{y^*} m'(z)dz - \int^{y^*}_0 \pi(z)m'(z) dz. 
\end{equation*}
Hence, by the assumption \ref{assumption 3} and the $C^1$-condition $W'(y^*)=\gamma$, we get
\begin{equation*} 
\beta  = \bigg[\int_{0}^{y^*} m'(z)dz\bigg]^{-1} \left[ \int^{y^*}_{0} \pi(z)m'(z) dz + \frac{\gamma}{S'(y^*)} \right].
\end{equation*}
Finally, by using the formula (\ref{m integraali skaalana}), we arrive at
\begin{equation} \label{beta below}
\beta  = \bigg[\int_{0}^{y^*} m'(z)dz\bigg]^{-1} \left[ \int^{y^*}_{0} \pi_{\mu}(z)m'(z) dz \right].
\end{equation}

Next we consider the equation (\ref{equation above}). We immediately find that a particular solution is
\begin{equation*}
W(x) = (\R \pi_{\gamma})(x) - \frac{\beta}{\lambda} - \gamma y^* + W(y^*).
\end{equation*}
Hence, we conjecture analogously to \cite{Lempa2014} p.113, that the solution to (\ref{equation above}) is
\begin{equation} \label{W above}
W(x) = (\R \pi_{\gamma})(x) - \frac{\beta}{\lambda} - \gamma y^* + W(y^*) + C \varphi_{\lambda}(x).
\end{equation}
To find the constants $C$ and $\beta$, we first use the continuity of $W$ at the boundary $y^*$, which allows us to substitute $x=y^*$ to (\ref{W above}). This yields
\begin{equation} \label{ylapuolella yhtalo 1}
0 = (\R \pi_{\gamma})(y^*) - \frac{\beta}{\lambda} - \gamma y^* + C\varphi_{\lambda}(y^*).
\end{equation}
Then, by applying the condition $W'(y^*) = \gamma$ on (\ref{W above}), we find that
\begin{equation*}
C = \frac{\gamma - (\R \pi_{\gamma})'(y^*)}{\varphi_{\lambda}'(y^*)}.
\end{equation*}
Combining this with (\ref{ylapuolella yhtalo 1}) gives 
\begin{equation*} 
\beta = \lambda (\R \pi_{\gamma})(y^*)- \lambda \gamma y^* + \frac{\gamma - (\R \pi_{\gamma})'(y^*)}{\varphi_{\lambda}'(y^*)} \lambda \varphi_{\lambda}(y^*).
\end{equation*}
To re-write this expression, we first notice that a straightforward differentiation gives
\begin{align*}
\frac{d}{dx} \bigg( x \frac{\varphi_{\lambda}'(x)}{S'(x)} - \frac{\varphi_{\lambda}(x)}{S'(x)} \bigg) = -m'(x)\varphi_{\lambda}(x)(\mu(x)- \lambda x).
\end{align*}
Thus, by the fundamental theorem of calculus and the naturality of the upper boundary, we get
\begin{equation} \label{Eq: phin mu integraali}
y^* \frac{\varphi_{\lambda}'(y^*)}{S'(y^*)} - \frac{\varphi_{\lambda}(y^*)}{S'(y^*)} = \int_{y^*}^{\infty} m'(z)\varphi_{\lambda}(z)(\mu(z)- \lambda z) dz.
\end{equation}
Next, by using the formula (\ref{calculation formula for resolvent}), we find that
\begin{align*}
 & (\R \pi_{\gamma})(y^*)\varphi_{\lambda}'(y^*)   - (\R \pi_{\gamma})'(y^*) \varphi_{\lambda}(y^*) \\ & =
 - S'(y^*)\int_{y^*}^{\infty} \varphi_{\lambda}(z) \pi_{\gamma}(z)m'(z)dz. 
\end{align*}
Combining these observations with the formula (\ref{natural boundary phi integraali}), the constant $\beta$ reads as
\begin{align*}
\beta = & \bigg[ \int_{y^*}^{\infty} \varphi_{\lambda}(z)m'(z)dz \bigg]^{-1} \times \\
& \bigg[ \int_{y^*}^{\infty} \varphi_{\lambda}(z)  \pi_{\gamma}(z)m'(z)dz + \gamma \int_{y^*}^{\infty} m'(z)\varphi_{\lambda}(z)(\mu(z)- \lambda z) dz  \bigg]. \nonumber
\end{align*}
Finally, by recalling the definition of $\pi_{\mu}(x)$, we have
\begin{equation} \label{beta above}
\beta= \bigg[ \int_{y^*}^{\infty} \varphi_{\lambda}(z)m'(z)dz \bigg]^{-1} \bigg[ \int_{y^*}^{\infty} \varphi_{\lambda}(z) \pi_{\mu}(z)m'(z)dz \bigg].
\end{equation}

Now, by equating the representations (\ref{beta below}) and (\ref{beta above}) of $\beta$, we find the optimality condition
\begin{align*}
& \int^{y^*}_{0} \pi_{\mu}(z)m'(z) dz \int_{y^*}^{\infty} \varphi_{\lambda}(z)m'(z)dz \\ = & \int_{0}^{y^*} m'(z)dz  \int_{y^*}^{\infty} \varphi_{\lambda}(z) \pi_{\mu}(z)m'(z)dz,
\end{align*}
which can be re-expressed, using the functions $L(x)$ and $H(0,x)$, as
\begin{equation} \label{optimaalisuusyhtalo}
   m(0,y^*)L(y^*) = -\varphi_{\lambda}'(y^*)H(0,y^*).
\end{equation}
We proved in proposition \ref{uniqueness of the equation} that there exists a unique solution $\bar{x}$ to the condition (\ref{optimaalisuusyhtalo}), and thus, we will assume in the following that $y^*=\bar{x}$.

\begin{remark}
As often in ergodic optimal control problems, the potential value function $W(x)$ satisfies the second order differentiability across the boundary $\lim_{x \downarrow y^*} W''(x)= \lim_{x \uparrow y^*} W''(x)$. This can be verified as follows. When $x>y^*$, differentiation and the harmonicity properties $(R_{\lambda} (\mathcal{A-\lambda})\pi_{\gamma})(x)+\pi_{\gamma}(x) = 0$ and $(\mathcal{A}-\lambda)\varphi_{\lambda}(x) = 0$ yield
\begin{align*}
     \lim_{x \downarrow y^*}W''(x) & = (\R \pi_{\gamma})''(y^*) + C \varphi_{\lambda}''(y^*) \\
     & = \frac{2}{\sigma^2(y^*)} \bigg[ \lambda(R_{\lambda} \pi_{\gamma})(y^*) - \pi_{\gamma}(y^*) - \mu(y^*)(R_{\lambda} \pi_{\gamma})'(y^*) \\ & + \frac{\gamma-(R_{\lambda} \pi_{\gamma})'(y^*)}{\varphi_{\lambda}'(y^*)}(\lambda \varphi_{\lambda}(y^*)- \mu(y^*)\varphi_{\lambda}'(y^*))  \bigg], 
     \end{align*}
which after cancellation and formulas (\ref{Eq: phin mu integraali}) and (\ref{calculation formula for resolvent}) equals
\begin{align*}
     \frac{2}{\sigma^2(y^*)} \bigg[ -\frac{\lambda S'(y^*)}{\varphi_{\lambda}'(y^*)} \int_{y^*}^{\infty} \varphi_{\lambda}(z) m'(z) ( \pi_{\gamma}(z) - \gamma \lambda z + \mu(z) \gamma)dz - \pi_{\mu}(y^*)  \bigg]. 
\end{align*}
Therefore, by using the formulas (\ref{natural boundary phi integraali}) and (\ref{beta above}), we find that
\begin{align*}
     \lim_{x \downarrow y^* }W''(x) = \frac{2}{\sigma^2(y^*)} [ \beta - \pi_{\mu}(y^*)  ].
\end{align*}
On the other hand, when $x < y^*$, we notice that
\begin{equation*}
    \frac{d}{dx} \bigg[ \frac{W'(x)-\gamma}{S'(x)} \bigg] = (\mathcal{A}W'(x) - \gamma \pi(x))m'(x) = (\beta - \pi_{\mu}(x))m'(x).
\end{equation*}
Hence, by differentiating the left hand side and plugging in $x = y^*$, we find by the first order condition $W'(y^*) = \gamma$ that 
\begin{equation*}
    \lim_{x \uparrow y^* }W''(x) = \frac{2}{\sigma^2(y^*)} [ \beta - \pi_{\mu}(y^*) ].
\end{equation*}
\end{remark}

\subsection{Sufficient conditions}

We begin by an initial remark. When $x > y^*$, we get by differentiating (\ref{W above}) and using lemma \ref{lemma for variational} that
\begin{equation*}
    W'(x)-\gamma = \varphi'_{\lambda}(x)\bigg[ \frac{(R_{\lambda} \pi_{\gamma})'(x)-\gamma}{\varphi'_{\lambda}(x)} -\frac{(R_{\lambda} \pi_{\gamma})'(y^*)-\gamma}{\varphi'_{\lambda}(y^*)} \bigg] > 0.
\end{equation*}
In the opposite case, when $x < y^*$, we have
\begin{equation*}
    \frac{d}{dx} \bigg[ \frac{W'(x)-\gamma}{S'(x)} \bigg] = (\beta - \pi_{\mu}(x))m'(x).
\end{equation*}
Thus, by integrating over the interval $(0, x)$, and using the expression (\ref{beta below}) and lemma \ref{lemma for variational}, we find that
\begin{equation*}
    \frac{W'(x)-\gamma}{S'(x)} = m(0, x) \bigg[ \frac{\int_{0}^{y^*} \pi_{\mu}(t)m'(t)dt }{m(0, y^*)} - \frac{\int_{0}^{x} \pi_{\mu}(t)m'(t)dt }{m(0, x)} \bigg] < 0.
\end{equation*}
These observations imply, that under the standing assumptions, the function $W(x)-\gamma x$ has a global minimum at $y^*$ which shows that $W(x)$ satisfies the variational equality
\begin{equation} \label{variational equality}
    \mathcal{A}W(x)+\pi(x) + \lambda \big[ \inf_{y \leq x}\{(W(y)-\gamma y)-(W(x)-\gamma x)\} \big] = \beta.
\end{equation}

\begin{proposition}[Verification]
Under the assumptions 1, 2 and 3, the optimal policy is as follows. If the controlled process $X^{\zeta}$ is above the threshold $y^*$ at a jump time of $N$, i.e. $X^{\zeta}_{T_-} > y^*$, the decision maker should take the controlled process $X^{\zeta}$ to $y^*$. Further, the threshold $y^*$ is uniquely determined by (\ref{optimaalisuusyhtalo}), and the constant $\beta$ characterized by (\ref{beta below}) and (\ref{beta above}) gives the minimum average cost 
\begin{equation}
    \beta = \inf_{\zeta} \liminf_{T \to \infty} \frac{1}{T} \mathbb{E}_x \bigg[ \int_0^T (\pi(X_s)ds + \gamma d \zeta_s) \bigg].
\end{equation}
\end{proposition}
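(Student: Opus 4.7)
The approach is to couple the variational equality (\ref{variational equality}) with an application of It\^o's formula to $W$ along an arbitrary controlled trajectory $X^{\zeta}$. Between consecutive jumps of the signal process $N$ the controlled dynamics coincide with those of the uncontrolled diffusion; at each jump time $T_i$ the process is displaced by $-\eta_{T_i}$, contributing a jump $W(X^{\zeta}_{T_i-}-\eta_{T_i})-W(X^{\zeta}_{T_i-})$ to $W(X^{\zeta})$. After localizing by a sequence of stopping times $\tau_n\uparrow\infty$ to discharge the local-martingale part of the stochastic integral, I would apply It\^o's formula and compensate the jump sum using the intensity $\lambda\,ds$ of $N$ (legitimate by the independence of $N$ from $X$ and the predictability of $\eta$), producing the Dynkin-type identity
\begin{equation*}
\mathbb{E}_x[W(X^{\zeta}_T)]-W(x)=\mathbb{E}_x\!\int_0^T\!\big[\mathcal{A}W(X^{\zeta}_s)+\lambda\big(W(X^{\zeta}_s-\eta_s)-W(X^{\zeta}_s)\big)\big]\,ds.
\end{equation*}
Combining with $\mathbb{E}_x\!\int_0^T\gamma\,d\zeta_s=\lambda\gamma\,\mathbb{E}_x\!\int_0^T\eta_s\,ds$ rearranges to
\begin{equation*}
\mathbb{E}_x\!\int_0^T\!(\pi(X^{\zeta}_s)\,ds+\gamma\,d\zeta_s)=\mathbb{E}_x\!\int_0^T\!\big[\mathcal{A}W+\pi+\lambda(W(X^{\zeta}_s-\eta_s)-W(X^{\zeta}_s)+\gamma\eta_s)\big]\,ds+W(x)-\mathbb{E}_x[W(X^{\zeta}_T)].
\end{equation*}

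The variational equality (\ref{variational equality}) bounds the bracketed integrand pointwise from below by $\beta$, with equality precisely when $\eta_s$ attains the infimum in $\inf_{0\leq\eta\leq X^{\zeta}_s}\{W(X^{\zeta}_s-\eta)-W(X^{\zeta}_s)+\gamma\eta\}$. The analysis preceding (\ref{variational equality}) shows that $W(x)-\gamma x$ is strictly decreasing on $(0,y^*)$ and strictly increasing on $(y^*,\infty)$, so the minimiser is $\eta^{*}=\max(X^{\zeta}_s-y^*,0)$; this is exactly the proposed threshold impulse policy. Hence for every admissible $\zeta$
\begin{equation*}
\mathbb{E}_x\!\int_0^T\!(\pi(X^{\zeta}_s)\,ds+\gamma\,d\zeta_s)\geq\beta T+W(x)-\mathbb{E}_x[W(X^{\zeta}_T)],
\end{equation*}
with equality under the candidate $\zeta^{*}$. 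Dividing by $T$ and taking $\liminf_{T\to\infty}$ yields $J(x,\zeta)\geq\beta-\limsup_{T}T^{-1}\mathbb{E}_x[W(X^{\zeta}_T)]$.

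The final and hardest step is to dispose of the boundary correction $T^{-1}\mathbb{E}_x[W(X^{\zeta}_T)]$. For the candidate policy, the Poisson-driven impulses keep $X^{\zeta^{*}}$ positively recurrent on $(0,\infty)$: after each jump of $N$ the process is reset to at most $y^{*}$, and the integrability built into assumption \ref{assumption 3} yields a stationary law with $\mathbb{E}[W(X^{\zeta^{*}}_{\infty})]<\infty$, whence $T^{-1}\mathbb{E}_x[W(X^{\zeta^{*}}_T)]\to 0$ and $J(x,\zeta^{*})=\beta$. For arbitrary $\zeta$ the main obstacle is ruling out policies that make $\mathbb{E}_x[W(X^{\zeta}_T)]$ grow super-linearly; I would argue that any such $\zeta$ necessarily yields $J(x,\zeta)=+\infty$, because the coercivity of $\pi_{\mu}$ encoded in assumption \ref{assumption 2}(iv) forces the running cost to dominate any transient $W$-growth, so the inequality $J(x,\zeta)\geq\beta$ is trivial in those cases. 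Combining the two bounds gives $\beta=\inf_{\zeta}J(x,\zeta)=J(x,\zeta^{*})$, as claimed.
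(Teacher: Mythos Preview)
Your overall strategy---apply It\^o's formula to $W(X^{\zeta})$, invoke the variational equality \eqref{variational equality} to bound the drift-plus-jump integrand from below by $\beta$, and then dispose of the boundary correction $T^{-1}\mathbb{E}_x[W(X^{\zeta}_T)]$---is exactly the paper's route. Two points of execution differ, and the second is a genuine gap.

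First, you compensate the Poisson jumps at the outset and claim the Dynkin \emph{equality}
\[
\mathbb{E}_x[W(X^{\zeta}_T)]-W(x)=\mathbb{E}_x\!\int_0^T\!\big[\mathcal{A}W(X^{\zeta}_s)+\lambda\big(W(X^{\zeta}_s-\eta_s)-W(X^{\zeta}_s)\big)\big]\,ds
\]
for every admissible $\zeta$. Passing from the localised identity at $T\wedge\tau_n$ to time $T$ needs integrability you have not verified and which can fail for bad controls. The paper sidesteps this: it keeps the Brownian integral $M$ and the compensated-Poisson integral $Z_t=\int_0^t\Phi(X^{\zeta}_{s-})\,d\tilde N_s$ explicit, and works with the \emph{inequality} $W(X^{\zeta}_{t\wedge\tau(\rho)})\geq W(x)+\beta(t\wedge\tau(\rho))-\int_0^{t\wedge\tau(\rho)}[\pi\,ds+\gamma\,d\zeta]+M+Z$. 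Since the stopped $M+Z$ is a (sub)martingale, taking expectations yields the lower bound directly. For the reverse inequality under $\zeta^*$ the paper does \emph{not} appeal to a stationary law as you do; it observes that at equality $M_T+Z_T$ is a local martingale bounded below, hence a supermartingale, giving $\mathbb{E}_x\int_0^T(\pi\,ds+\gamma\,d\zeta^*)\leq\beta T+W(x)$ without any ergodic input.

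Second, and more seriously, your dichotomy for arbitrary $\zeta$---``super-linear growth of $\mathbb{E}_x[W(X^{\zeta}_T)]$ forces $J(x,\zeta)=+\infty$ by coercivity of $\pi_\mu$''---does not follow from assumption~\ref{assumption 2}(iv), which gives no quantitative growth of $\pi$ at infinity. The paper does not succeed in proving the vanishing of $T^{-1}\mathbb{E}_x[W(X^{\zeta}_T)]$ under assumptions 1--3 alone either: it defers this to a separate lemma that imposes the \emph{additional} hypothesis $\pi(x)\geq C(x^{\alpha}-1)$ for some $C,\alpha>0$, and then argues in two cases (on $\{x>y^*\}$ via the resolvent representation, l'H\^opital, and the contradiction \eqref{LemmaContradiction}; on $\{x<y^*\}$ via the bound $W(x)\leq C_0 S(x)+C_1$ and the supermartingale property of $S(X_t)$). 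So the step you flag as ``the final and hardest'' is indeed the crux, and your coercivity heuristic does not close it.
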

\begin{proof}
Define the function 
\begin{align*}
    \Phi(x):= & \inf_{y \leq x}\{(W(y)-\gamma y)-(W(x)-\gamma x)\} \\ = & \{W(y^*)-W(x)+\gamma(x-y^*) \} \mathbbm{1}_{[y^*,\infty)}(x),
\end{align*}
and a family of almost surely finite stopping times $\tau(\rho)_{\rho > 0}$ as $\tau(\rho):= \tau_0^\zeta \wedge \rho \wedge \tau^\zeta_{\rho}$ where $\tau_{\rho}^\zeta = \inf \{t \geq 0: X_t^\zeta \not \in (1/\rho, \rho) \}$. By applying the Doléans-Dade-Meyer change of variables formula to the process $W(X_t)$, we obtain
\begin{align*}
    W(X_{t \wedge \tau(\rho)}) - W(x) = & \int_0^{t \wedge \tau(\rho)} \mathcal{A}W(X_s^{\zeta})ds + \int_0^{t \wedge \tau(\rho)} \sigma(X_s^{\zeta})W'(X_s^{\zeta})dB_s \\ & + \sum_{0 \leq s \leq {t \wedge \tau(\rho)}} [W(X_s^{\zeta})-W(X_{s-}^{\zeta})]. 
\end{align*}
Because the control $\zeta$ jumps only if the Poisson process $N$ jumps, we have that $W(X_s^{\zeta}) - W(X_{s-}^{\zeta}) + \gamma(\Delta \zeta_s) \geq \Phi(X_{s-}^{\zeta})$. By combining these two observations with (\ref{variational equality}), we get
\begin{align} \label{W verification inequality}
    W(X_{t \wedge \tau(\rho)})  \geq & W(x) + \beta ({t \wedge \tau(\rho)}) - \int_0^{t \wedge \tau(\rho)} [\pi(X_s^{\zeta})ds + \gamma d \zeta_s] \\ & +  Z_{t \wedge \tau(\rho)} + M_{t \wedge \tau(\rho)}, \nonumber
\end{align}
where
\begin{equation*}
    M_t := \int_0^t \sigma(X_s^{\zeta})W'(X_s^{\zeta})dB_s, \quad \quad Z_t := \int_0^t \Phi(X_s^{\zeta})d\tilde{N}_s. 
\end{equation*}
Here $\tilde{N}_t = (N_t-\lambda t)_{t \geq 0}$ is the compensated Poisson process. It follows from the calculation above, that $Z_{t \wedge \tau(\rho)} + M_{t \wedge \tau(\rho)}$ is a submartingale and thus $\mathbb{E}_x [ Z_{t \wedge \tau(\rho)} + M_{t \wedge \tau(\rho)} ] \geq 0$. Taking expectation from both sides, dividing by ${t \wedge \tau(\rho)}$ and letting $t, \rho \to \infty$, we find that
\begin{equation*}
    \liminf_{T\to \infty} \frac{1}{T} \mathbb{E}_x \bigg[ W(X_T^{\zeta}) + \int_0^T (\pi(X_s^{\zeta}) ds + \gamma d\zeta_s) \bigg] \geq \beta.
\end{equation*} 
Thus, if $\liminf_{T\to \infty} \frac{1}{T} \mathbb{E}_x [ W(X_T^{\zeta})] = 0$, it follows that $J(x,\zeta) \geq \beta$; we postpone the proof of this limiting property to the following lemma.

Next, we prove that ${J(x,\zeta^*) \leq \beta}$. We proceed as above and note that (\ref{W verification inequality}) holds as equality when $\zeta = \zeta^*$. Hence, the local martingale term $M_T+Z_T$ is now uniformly bounded from below by $-W(x)-\beta T$, and therefore a supermartingale. Thus, we have that 
\begin{equation*}
    \mathbb{E}_x \bigg[\int_0^T (\pi(X_s^{\zeta}) ds + \gamma d\zeta^*_s) \bigg] \leq \beta T + W(x)- \mathbb{E}_x[W(X_T)] \leq \beta T + W(x).
\end{equation*}
Finally, dividing by $T$ and letting $T \to \infty$, we get
\begin{equation*}
    J(x,\zeta^*) \leq \beta,
\end{equation*}
which completes the proof.
\end{proof}

As usually in ergodic control problems, we noticed in the proof that the verification theorem holds under the assumption that $\liminf_{T\to \infty} \frac{1}{T} \mathbb{E}_x [ W(X_T^{\zeta})] = 0$. Thus, in the following lemma we give sufficient condition on $\pi(x)$ under which the limit equals zero.
\begin{lemma}  The limit
\begin{equation} \label{Limit 0 W}
    \liminf_{T\to \infty} \frac{1}{T} \mathbb{E}_x [ W(X_T^{\zeta})] = 0
\end{equation}
holds if
$$\pi(x) \geq C (x^{\alpha}-1),$$
where $\alpha$ and $C$ are positive constants.
\end{lemma}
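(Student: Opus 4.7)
The target is $\liminf_{T\to\infty}\frac{1}{T}\mathbb{E}_x[W(X_T^\zeta)]=0$. The plan is to prove matching bounds $\liminf\ge 0$ and $\liminf\le 0$. For the first I would exploit a structural property of $W$ derived earlier; for the second I would combine the polynomial lower bound on $\pi$ with a Fubini-style subsequence argument, after the harmless reduction to the case $J(x,\zeta)<\infty$ (otherwise the verification inequality $J(x,\zeta)\ge\beta$ is vacuous).

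\textbf{Lower bound.} The analysis leading to the variational equality \eqref{variational equality} established that $x\mapsto W(x)-\gamma x$ has its global minimum at $y^*$. Combined with $X_T^\zeta\ge 0$ and $\gamma>0$, this gives
$$
W(X_T^\zeta)\ \ge\ \gamma X_T^\zeta + (W(y^*)-\gamma y^*)\ \ge\ W(y^*)-\gamma y^*.
$$
Taking expectations and dividing by $T$ yields $\mathbb{E}_x[W(X_T^\zeta)]/T\ge (W(y^*)-\gamma y^*)/T\to 0$, so $\liminf_T\mathbb{E}_x[W(X_T^\zeta)]/T\ge 0$.

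\textbf{Upper bound.} Finiteness of $J(x,\zeta)$ gives $\int_0^T\mathbb{E}_x[\pi(X_s^\zeta)]\,ds=O(T)$, and the hypothesis $\pi(x)\ge C(x^\alpha-1)$ then forces $\int_0^T\mathbb{E}_x[(X_s^\zeta)^\alpha]\,ds\le KT$ for large $T$. I would then use the following standard averaging argument: if $\mathbb{E}_x[(X_T^\zeta)^\alpha]/T\ge\varepsilon$ held for all $T\ge T_0$, integration would give $\int_{T_0}^T\mathbb{E}_x[(X_s^\zeta)^\alpha]\,ds\ge\varepsilon(T^2-T_0^2)/2$, contradicting the linear bound for large $T$. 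Hence there is a sequence $T_n\to\infty$ along which $\mathbb{E}_x[(X_{T_n}^\zeta)^\alpha]/T_n\to 0$. Invoking a polynomial growth bound $W(x)\le K_1(1+x^\alpha)$ (justified below), this gives $\mathbb{E}_x[W(X_{T_n}^\zeta)]/T_n\to 0$ along the same subsequence, hence $\liminf_T\mathbb{E}_x[W(X_T^\zeta)]/T\le 0$. Combining both bounds yields the claim.

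\textbf{Main obstacle.} The delicate step is establishing $W(x)\le K_1(1+x^\alpha)$. On the waiting region $[0,y^*]$ continuity of $W$ together with the natural boundary at $0$ give uniform boundedness. On $(y^*,\infty)$ I would use the explicit expression $W(x)=(R_\lambda\pi_\gamma)(x)+C\varphi_\lambda(x)+\textup{const}$. The term $\varphi_\lambda(x)$ is bounded by $\varphi_\lambda(y^*)$ since $\varphi_\lambda$ is decreasing with limit $0$ at the natural upper boundary, while $(R_\lambda\pi_\gamma)(x)$ must be controlled via \eqref{calculation formula for resolvent}, balancing the blow-up of $\psi_\lambda(x)$ against the vanishing tail $\int_x^\infty\varphi_\lambda(z)\pi_\gamma(z)m'(z)\,dz$. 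The assumptions $\pi_\mu,\,\textup{id}\in\mathcal{L}_1^\lambda$ and the asymptotics $\varphi_\lambda'/S'\to 0$ at infinity (already used in \eqref{natural boundary phi integraali}) should yield a polynomial estimate on $(R_\lambda\pi_\gamma)(x)$, at which point the hypothesis $\pi(x)\ge C(x^\alpha-1)$ closes the required bound $W(x)\le K_1(1+x^\alpha)$.
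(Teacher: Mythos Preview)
Your overall architecture (split into $\liminf\ge 0$ via the global minimum of $W-\gamma x$, and $\liminf\le 0$ via a subsequence extracted from the $O(T)$ bound on $\int_0^T\mathbb E_x[(X^\zeta_s)^\alpha]\,ds$) is sound and in fact cleaner than the paper's presentation. The problem lies in the pointwise estimate $W(x)\le K_1(1+x^\alpha)$, where both of your sub-arguments are incomplete, and the paper in fact proceeds quite differently.

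\medskip
\noindent\textbf{Near the lower boundary.} Your claim that ``continuity of $W$ together with the natural boundary at $0$ give uniform boundedness on $[0,y^*]$'' is not justified and is in general false: $W$ solves $\mathcal AW+\pi=\beta$ on $(0,y^*)$, and solutions of this ODE need not stay bounded as $x\downarrow 0$ when $0$ is natural (indeed $\int_0^{y^*}S'(y)\,m(0,y)\,dy=\infty$ there). The paper does \emph{not} try to bound $W$ pointwise on this region. Instead it integrates the ODE to obtain
\[
W(x)\ \le\ W(y^*)+\beta\,m(0,y^*)\bigl(S(y^*)-S(x)\bigr),
\]
so that $W(x)\le C_0\,S(x)+C_1$, and then uses that $S(X_t^\zeta)$ is a (local) martingale bounded below, hence a supermartingale, to get $\mathbb E_x[W(X_T^\zeta)]\le C_0\,S(x)+C_1$ uniformly in $T$. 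This supermartingale step is the missing idea in your treatment of the waiting region.

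\medskip
\noindent\textbf{On the action region.} You correctly isolate $(R_\lambda\pi_\gamma)(x)$ as the only nontrivial term (since $\varphi_\lambda$ is bounded on $[y^*,\infty)$), but the promised polynomial control of the resolvent via \eqref{calculation formula for resolvent} is left at the level of ``should yield''; balancing $\psi_\lambda(x)$ against the decaying tail integral is exactly the hard part, and nothing in the standing assumptions gives a rate. The paper sidesteps this entirely: rather than bounding $(R_\lambda\pi_\gamma)(x)$, it computes the expectation directly using the Markov property,
\[
\mathbb E_x\bigl[(R_\lambda\pi_\gamma)(X_T)\bigr]=e^{\lambda T}\int_T^\infty e^{-\lambda k}\,\mathbb E_x[\pi_\gamma(X_k)]\,dk,
\]
and then applies l'H\^opital to reduce the question to $\liminf_T T^{-1}\mathbb E_x[\pi_\gamma(X_T)]$, which is where the hypothesis $\pi(x)\ge C(x^\alpha-1)$ enters via the same contradiction argument you sketch. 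So the paper trades your pointwise resolvent estimate for a Markov-property/l'H\^opital identity that needs no growth information on $\psi_\lambda$.

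\medskip
In short: your lower bound and subsequence extraction are fine, but the global bound $W(x)\le K_1(1+x^\alpha)$ is not established, and the two regions require the two different devices above (scale-function supermartingale near $0$; Markov--l'H\^opital for the resolvent at infinity) rather than continuity and a direct resolvent estimate.
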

\begin{proof}
Let $x > y^*$. Then $W(x)$ reads as
\begin{equation*} 
W(x) = (\R \pi_{\gamma})(x) - \frac{\beta}{\lambda} - \gamma y^* + W(y^*) + C \varphi_{\lambda}(x).
\end{equation*}
\begin{equation*} 
W(x) = (\R \pi_{\mu})(x) - \frac{\beta}{\lambda} + \gamma (x-y^*) + W(y^*) + C \varphi_{\lambda}(x).
\end{equation*}
Because $\varphi_{\lambda}(x)$ is bounded in this region, we only need to deal with the resolvent term. By Markov property and substitution $k=s+T$, we find that
\begin{align*}
    \mathbb{E}_x [(\R \pi_{\gamma})(X_T)] = & \int_0^{\infty}e^{-\lambda s} \mathbb{E}_x \big[ \mathbb{E}_{X_T} \big[  \pi_{\gamma}(X_s)  \big] \big] ds \\
    = & \int_0^{\infty}e^{-\lambda s} \mathbb{E}_x \big[ \mathbb{E}_{x} \big[  \pi_{\gamma}(X_{s+T}) \mid \mathcal{F}_T  \big] \big] ds \\
    = & \int_0^{\infty}e^{-\lambda s} \mathbb{E}_x \big[   \pi_{\gamma}(X_{s+T}) \big] ds \\
    = & e^{\lambda T} \int_T^{\infty}e^{-\lambda k} \mathbb{E}_x \big[   \pi_{\gamma}(X_{k}) \big] dk.
\end{align*}
Now by l'Hopitals rule and the assumption that id and $\pi$ are elements of $\mathcal{L}^{\lambda}_1$, we find that
\begin{equation} \label{limit of resolvent}
    \liminf_{T \to \infty}  \frac{e^{\lambda T}}{T} \int_T^{\infty}e^{-\lambda k} \mathbb{E}_x \big[   \pi_{\gamma}(X_{k}) \big] dk = \liminf_{T \to \infty} \frac{1}{T}  \mathbb{E}_{x} \big[   \pi_{\gamma}(X_{T}) \big].
\end{equation}
On the other hand, if 
$$
\liminf_{T \to \infty} \frac{1}{T} \mathbb{E}_x [X_T] > 0
$$ 
there exists $T_1$ such that 
$$
\mathbb{E}_x [X_s]  > \varepsilon \frac{s}{(\alpha+1)^{1/\alpha}}
$$
for all $s > T_1$. Together with the assumption
$\pi(x) \geq  C (x^{\alpha}-1)$ this leads to contradiction as
\begin{equation} \label{LemmaContradiction}
\begin{aligned}
   \infty > & \liminf_{T \to \infty} \frac{1}{T} \mathbb{E}_x \bigg[ \int_0^T (\pi(X_s)ds + \gamma d \zeta_s) \bigg] \\ \geq & \liminf_{T \to \infty} \frac{1}{T} \mathbb{E}_x \bigg[ \int_0^T \pi(X_s)ds \bigg] \\ \geq & -C + C \liminf_{T \to \infty} \frac{1}{T} \mathbb{E}_x \bigg[ \int_0^T X_s^{\alpha} ds \bigg] \\
   \geq & -C + C \liminf_{T \to \infty} \frac{1}{T} \mathbb{E}_x \bigg[ \frac{\varepsilon^{\alpha}}{\alpha+1}\int_0^T s^{\alpha} ds \bigg] \\
   = & -C + C \varepsilon^{\alpha} \liminf_{T \to \infty} T^{\alpha} = \infty.
\end{aligned}
\end{equation}
Similarly, we must have
$$
\liminf_{T \to \infty} \frac{1}{T} \mathbb{E}_x [\pi(X_T)] = 0,
$$ 
and thus conclude that the limit ($\ref{Limit 0 W}$) must vanish.

In the opposite case $x < y^*$, we find by integrating in (\ref{DY below}) that 
\begin{equation*}
    \frac{W'(y)}{S'(y)} = \frac{\gamma}{S'(y^*)} +  \int_y^{y^*} m'(z)(\pi(z)-\beta) dz.
\end{equation*}
Multiplying by $S'(x)$ and integrating over the interval $(x,y^*)$, we have
\begin{equation*}
    W(x) = W(y^*) - \frac{\gamma}{S'(y^*)} \int_x^{y^*}S'(z)dz - \int_x^{y^*} \int_y^{y^*} m'(z)(\pi(z)-\beta) dz S'(y)dy.
\end{equation*}
This has the upper bound (as the second term is negative and $\pi(x)$ is positive everywhere)
\begin{equation*}
    W(x) \leq W(y^*)  + \beta \int_x^{y^*} \int_y^{y^*} m'(z) dz S'(y)dy.
\end{equation*}
As the last integral is positive we can by assumption \ref{assumption 3} expand the region of the inner integral to get 
\begin{equation*}
    W(x) \leq W(y^*)  + \beta \int_x^{y^*} \int_0^{y^*} m'(z) dz S'(y)dy.
\end{equation*}
Thus,
\begin{equation*}
    W(x) \leq W(y^*)  + \beta m(0,y^*)(S(y^*)-S(x)).
\end{equation*}
Consequently the upper bound is of the form
\begin{equation*}
    W(x) \leq C_0 S(x) + C_1,
\end{equation*}
where $C_0$ and $C_1$ are constants. Since $S(X_t)$ is a non-negative local martingale (see \cite{Bass1998} p.88), and hence a supermartingale, we have
\begin{align*}
    \mathbb{E}_x[W(X_T)] \leq C_0 \mathbb{E}_x[S(X_T)] + C_1 & \leq C_0 \mathbb{E}_x[S(X_0)] + C_1 \\ & = C_0 S(x) + C_1.
\end{align*}
Hence, also in this case the limit ($\ref{Limit 0 W}$) must vanish.
\end{proof}

\begin{remark}
Another approach to see that the limit vanishes is to get a suitable upper bound for $W(x)$. Indeed, if $W(x) \leq A_0 + A_1\pi(x)$ for some constant $A_0$ and $A_1$, then the result also holds by lemma 3.1 in \cite{Wang2001}.
\end{remark}

\section{Ergodic Singular Control Problem: Connecting the Problems}

The singular control problem, where the agent is allowed to control the process $X_t$ without any constraints, is studied in the case of Brownian motion in \cite{Karatzas1983} and in the case of a more general one-dimensional diffusion in \cite{Alvarez2019, JackZervos2006}. In this corresponding singular problem the optimal policy is a local time reflecting barrier policy. The threshold $y^*_s$ characterizing the optimal policy is the unique solution to the optimality condition (see \cite{Alvarez2019} p.17)
\begin{equation} \label{singular problem}
H(0,y^*_s)=0.
\end{equation}
Heuristically one would expect that in the limit $\lambda \to \infty$, this optimal boundary $y^*_s$ coincides with the optimal boundary $y^*$. This is because in the limit, the decision maker has more frequent opportunities to exercise the control. This is shown in the next proposition after an auxiliary lemma. 
\begin{lemma} \label{limit phi}
Let $\varphi_{\lambda}(x)$ be the decreasing solution to the differential equation $(\mathcal{A}-\lambda)f=0$ and assume that $x < z$, then
\begin{equation}
\frac{\varphi_{\lambda}(z)}{\varphi_{\lambda}(x)} \xrightarrow{ \lambda \to \infty } 0.
\end{equation}
\end{lemma}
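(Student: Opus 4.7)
The plan is to reduce the claim to a dominated-convergence argument via the standard probabilistic interpretation of the ratio $\varphi_\lambda(z)/\varphi_\lambda(x)$ as the Laplace transform of a hitting time. Concretely, for $x < z$ in the interior of the state space, the decreasing fundamental solution satisfies
\begin{equation*}
\frac{\varphi_\lambda(z)}{\varphi_\lambda(x)} = \mathbb{E}_z\bigl[e^{-\lambda \tau_x}\bigr],
\end{equation*}
where $\tau_x := \inf\{t \geq 0 : X_t = x\}$ is the first hitting time of $x$ by the uncontrolled diffusion $X$ issued from $z$, with the convention $e^{-\lambda \tau_x} = 0$ on $\{\tau_x = \infty\}$; see, for instance, \cite{BorodinSalminen}, Section II.10.

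With this identity in hand, the rest of the argument is elementary. Since $X$ has continuous sample paths and $z > x$, the hitting time $\tau_x$ is strictly positive $\mathbb{P}_z$-almost surely, so $e^{-\lambda \tau_x} \to 0$ pointwise as $\lambda \to \infty$. Using the uniform bound $0 \leq e^{-\lambda \tau_x} \leq 1$, dominated convergence then yields
\begin{equation*}
\lim_{\lambda \to \infty} \frac{\varphi_\lambda(z)}{\varphi_\lambda(x)} = \lim_{\lambda \to \infty} \mathbb{E}_z\bigl[e^{-\lambda \tau_x}\bigr] = 0,
\end{equation*}
which is the claim.

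No genuine obstacle is expected. The only step requiring a line of justification is the hitting-time representation itself, but this is standard: under the regularity of the diffusion assumed throughout the paper, $\varphi_\lambda$ is characterised (up to a positive multiplicative constant) as the unique positive decreasing solution of $(\mathcal{A}-\lambda)f=0$, and the identity follows by applying the optional stopping theorem to the bounded local martingale $e^{-\lambda t}\varphi_\lambda(X_t)$ at $\tau_x$ and passing to the limit in $t$.
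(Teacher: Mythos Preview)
Your proof is correct and follows essentially the same approach as the paper: both invoke the hitting-time representation $\varphi_\lambda(z)/\varphi_\lambda(x) = \mathbb{E}_z[e^{-\lambda \tau_x}]$ for $z>x$ and then pass to the limit (the paper cites monotone convergence, you use dominated convergence; either suffices here). Your formulation is in fact slightly more precise, since the paper's displayed identity has the starting point and target of the hitting time interchanged.
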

\begin{proof}
Taking the limit $\lambda \to \infty$ in (see \cite{BorodinSalminen} p.18)
\begin{equation*}
    \mathbb{E}_x [e^{-\lambda \tau_z}] = \frac{\varphi_{\lambda}(z)}{\varphi_{\lambda}(x)},
\end{equation*}
where $\tau_z = \inf \{ t \geq \mid X_t = z \}$ is the first hitting time to $z$, yields the result by monotone convergence.
\end{proof}

We now have the following result.
\begin{proposition}
Define a function $G:\mathbb{R}_+\to \mathbb{R}$ as 
$$
\hat{G}(x) = L(x)+ \frac{\varphi_{\lambda}'(x)}{S'(x)m(0,x)} H(0,x).
$$
Let $y^*$ and $y^*_s$ be the unique solutions to $\hat{G}(x)=0$ and $H(0,x) = 0$, respectively. Then $\hat{G}(y^*_s) \to 0$ as $\lambda \to \infty$.
\end{proposition}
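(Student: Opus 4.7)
Since $y^*_s$ is by definition the unique root of $H(0,\cdot)$, the second term of $\hat G(y^*_s)$ drops out identically, and the claim reduces to $L(y^*_s)\to 0$ as $\lambda\to\infty$. My first step would be to combine the two summands of $L(y^*_s)$ into a single integral that vanishes at its left endpoint: invoking $-\varphi_\lambda'(y^*_s)/S'(y^*_s)=\lambda\int_{y^*_s}^\infty\varphi_\lambda(z)m'(z)dz$ from (\ref{natural boundary phi integraali}), one obtains
$$L(y^*_s)=\lambda\int_{y^*_s}^\infty\bigl[\pi_\mu(z)-\pi_\mu(y^*_s)\bigr]\varphi_\lambda(z)m'(z)dz,$$
with non-negative integrand because $y^*_s=\hat x>x^*$ and $\pi_\mu$ is increasing on $[x^*,\infty)$ by assumption \ref{assumption 2}(iv).

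The plan is then to integrate by parts against the harmonicity relation $d(\varphi_\lambda'(z)/S'(z))=\lambda\varphi_\lambda(z)m'(z)dz$, which comes from $(\mathcal{A}-\lambda)\varphi_\lambda=0$ together with the Feller form $\mathcal{A}=\frac{d}{dm}\frac{d}{dS}$. The boundary term at $y^*_s$ vanishes since $\pi_\mu(z)-\pi_\mu(y^*_s)=0$ there, and the one at infinity vanishes by the natural-boundary condition $\varphi_\lambda'/S'\to 0$ together with the integrability built into $\pi_\mu\in\mathcal{L}_1^\lambda$. This produces the equivalent representation
$$L(y^*_s)=\int_{y^*_s}^\infty\Bigl(-\frac{\varphi_\lambda'(z)}{S'(z)}\Bigr)\,d\pi_\mu(z),$$
in which the Stieltjes measure $d\pi_\mu$ is non-negative on $[y^*_s,\infty)$ by the monotonicity of $\pi_\mu$ there and the weight $-\varphi_\lambda'/S'$ is also non-negative.

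The finishing step is a dominated-convergence argument. For every fixed $z>0$ one has the pointwise limit $-\varphi_\lambda'(z)/S'(z)\to 0$: writing $-\varphi_\lambda'(z)/S'(z)=\lambda\int_z^\infty\varphi_\lambda(u)m'(u)du$, the decay of $\varphi_\lambda$ supplied by lemma \ref{limit phi} combined with an Abelian-type estimate gives $\lambda\varphi_\lambda(z)\to 0$, which forces the integral to vanish. The principal obstacle is producing a $\lambda$-uniform dominating function on the unbounded interval $[y^*_s,\infty)$ that is integrable against $d\pi_\mu$; one must quantify the competition between the prefactor $\lambda$ and the exponential decay of $\varphi_\lambda$ away from $y^*_s$. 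In the Brownian archetype this is the explicit envelope $-\varphi_\lambda'(z)/S'(z)\le e^{-1}/z$, and in the general case the envelope can be controlled using the hitting-time representation $\varphi_\lambda(u)=\varphi_\lambda(z)\mathbb{E}_u[e^{-\lambda\tau_z}]$ for $u\geq z$ together with the integrability hypotheses of assumption \ref{assumption 2}(iii).
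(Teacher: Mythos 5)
Your opening reductions are sound and in fact slightly sharper than the paper's displayed computation: you drop the $H$-term because $H(0,y^*_s)=0$, rewrite $L(y^*_s)=\lambda\int_{y^*_s}^{\infty}(\pi_\mu(z)-\pi_\mu(y^*_s))\varphi_\lambda(z)m'(z)\,dz$ via (\ref{natural boundary phi integraali}) (keeping the factor $\lambda$ that the paper's display silently loses), and the integration by parts against $d\bigl(\varphi_\lambda'(z)/S'(z)\bigr)=\lambda\varphi_\lambda(z)m'(z)\,dz$ is legitimate, the boundary term at infinity being controllable as you indicate. The genuine gap is the closing dominated-convergence step, and it is not a routine omission. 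The pointwise inputs you invoke --- ``$\lambda\varphi_\lambda(z)\to 0$'' and $-\varphi_\lambda'(z)/S'(z)\to 0$ --- cannot be extracted from lemma \ref{limit phi}: that lemma controls only the ratios $\varphi_\lambda(z)/\varphi_\lambda(x)$, whereas $\varphi_\lambda$ is determined only up to a $\lambda$-dependent multiplicative constant, so any statement about the absolute size of $\varphi_\lambda$ or of $\varphi_\lambda'/S'$ presupposes a choice of normalization and requires a separate argument. Moreover the $\lambda$-uniform dominating function, which you yourself flag as the principal obstacle, is exactly where the analytic work lies and is not supplied: $d\pi_\mu$ need not be a finite measure on $[y^*_s,\infty)$ (e.g.\ $\pi(x)=x^2$ in section 5), assumption \ref{assumption 2}(iii) gives integrability only against $\varphi_\lambda(z)m'(z)\,dz$, and the envelope must simultaneously absorb the prefactor $\lambda$ near $y^*_s$ and decay at infinity uniformly in $\lambda$; the Brownian bound $-\varphi_\lambda'(z)/S'(z)\le e^{-1}/z$ does not transfer to a general diffusion without proof. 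As written, the conclusion is not established.

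The paper's proof takes a different and shorter route that avoids both difficulties by normalizing: since $\hat G(y^*_s)=L(y^*_s)$, it works with $\hat G(y^*_s)/\varphi_\lambda(y^*_s)$, so that the integrand involves only the normalization-free ratio $\varphi_\lambda(z)/\varphi_\lambda(y^*_s)\le 1$, which tends to $0$ for every $z>y^*_s$ by lemma \ref{limit phi}; a domination uniform in $\lambda$ is then available because this ratio is nonincreasing in $\lambda$ (inequality (\ref{phin tavallisten jarjestys})), so for $\lambda\ge\lambda_0$ the integrand is bounded by its value at $\lambda_0$, which is integrable by assumption \ref{assumption 2}(iii). In other words, the paper proves the normalization-free formulation of the claim, and all quantities appearing in its argument are ratios covered by the hitting-time identity. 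If you want to salvage your integration-by-parts representation, you would first have to fix a normalization (say $\varphi_\lambda(y^*_s)=1$) and then prove a uniform envelope for $-\varphi_\lambda'(z)/S'(z)=\lambda\int_z^\infty\varphi_\lambda(u)m'(u)\,du$; carrying this out essentially reproduces the ratio estimates above, so the detour through $d\pi_\mu$ buys nothing over the paper's argument.
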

\begin{proof}
Because $H(0,y^*_s)=0$, and the upper boundary $\infty$ is natural, we have
\begin{equation*}
    \frac{\hat{G}(y^*_s)}{\varphi_{\lambda}(y^*_s)} = \frac{L(y^*_s)}{\varphi_{\lambda}(y^*_s)} = \int_{y^*_s}^{\infty} \frac{\varphi_{\lambda}(z)}{\varphi_{\lambda}(y^*_s)} (\pi_{\mu}(z)-\pi_{\mu}(y^*_s)) m'(z)dz.
\end{equation*}
Thus taking the limit $\lambda \to \infty$ yields the result by lemma \ref{limit phi}.
\end{proof}

It is also reasonable to expect that when $\lambda$ increases, it is more likely that the decision maker postpones the exercise of the control as he has more information about the underlying process available. Therefore, we expect that the optimal threshold $y^*$ is increasing as a function of $\lambda$. The next proposition shows that this is indeed the case.

\begin{proposition}
Assume that $\mu(x)>0$. Then the unique root $y^*_{\lambda}$ of the function
\begin{equation*}
    G_{\lambda}(x) = \frac{L(x)}{\varphi_{\lambda}'(x)}+\frac{H(0,x)}{S'(x) m(0,x)}
\end{equation*}
is increasing in $\lambda$.
\end{proposition}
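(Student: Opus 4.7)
The approach is via the implicit function theorem applied to the defining identity $G_\lambda(y^*_\lambda)=0$. First I would note that the argument used in Proposition \ref{uniqueness of the equation} shows that $G_\lambda$ strictly decreases through its unique zero: indeed one has $G_\lambda(x)=P(x)/(\varphi_\lambda'(x)S'(x)m(0,x))$ with $P$ as in that proposition, and since $\varphi_\lambda'<0$ while $S'\,m>0$, the sign of $G_\lambda$ is opposite to that of $P$, giving $\partial_x G_\lambda(y^*_\lambda)<0$. Hence the sign of $dy^*_\lambda/d\lambda$ coincides with the sign of $\partial_\lambda G_\lambda(y^*_\lambda)$, and everything reduces to showing the latter is strictly positive.

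Next I would bring $G_\lambda$ into a cleaner form. The natural upper boundary identity (\ref{natural boundary phi integraali}) simplifies $L$ to
$$L(x)=\lambda\int_x^{\infty}\bigl[\pi_\mu(z)-\pi_\mu(x)\bigr]\varphi_\lambda(z)m'(z)\,dz,$$
which, combined with $\varphi_\lambda'(x)=-\lambda S'(x)\int_x^\infty \varphi_\lambda m'$, gives $L(x)/\varphi_\lambda'(x)=(1/S'(x))\bigl[\pi_\mu(x)-\bar\pi_\lambda(x)\bigr]$, where $\bar\pi_\lambda(x):=\int_x^\infty \pi_\mu\varphi_\lambda m' \big/ \int_x^\infty \varphi_\lambda m'$. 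A parallel integration by parts turns $H$ into $H(0,x)=\int_0^x[\pi_\mu(z)-\pi_\mu(x)]m'(z)dz$, yielding $H(0,x)/(S'(x)m(0,x))=(1/S'(x))[\tilde\pi(x)-\pi_\mu(x)]$ with $\tilde\pi(x):=\int_0^x \pi_\mu m' / m(0,x)$ independent of $\lambda$. Adding gives the key representation
$$G_\lambda(x)=\frac{1}{S'(x)}\bigl[\tilde\pi(x)-\bar\pi_\lambda(x)\bigr],$$
and a short computation reveals $\partial_\lambda G_\lambda(x)=-\partial_\lambda\bar\pi_\lambda(x)/S'(x)$. Thus the task is to prove $\partial_\lambda\bar\pi_\lambda(y^*_\lambda)<0$.

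For the monotonicity of $\bar\pi_\lambda$ I would exploit a monotone likelihood ratio property of the weighting family $\nu_\lambda$ on $(x,\infty)$ with density proportional to $\varphi_\lambda m'$. For $z>x$ the ratio $\varphi_\lambda(z)/\varphi_\lambda(x)=\mathbb{E}_z[e^{-\lambda\tau_x}]$, and the strong Markov decomposition $\mathbb{E}_z[e^{-\lambda\tau_x}]=\mathbb{E}_z[e^{-\lambda\tau_y}]\mathbb{E}_y[e^{-\lambda\tau_x}]$ valid for $x<y<z$ shows that, for $\lambda_2>\lambda_1$, the likelihood ratio of the two weightings is strictly decreasing in $z$. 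Consequently $\nu_\lambda$ is stochastically decreasing in $\lambda$, and the $\nu_\lambda$-mean of any function increasing on $(x,\infty)$ strictly decreases with $\lambda$. Applying this to $\pi_\mu$ on $(y^*_\lambda,\infty)$, where $\pi_\mu$ is increasing provided $y^*_\lambda\geq x^*$, would finish the proof.

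The place where the assumption $\mu>0$ enters, and the main obstacle in executing the plan, is to establish the lower bound $y^*_\lambda\geq x^*$, which is needed to apply the MLR argument directly (since $\pi_\mu$ is only $U$-shaped around $x^*$ and not monotone on $(x,\infty)$ when $x<x^*$). I expect this bound to follow from a direct sign analysis of $G_\lambda(x^*)=(1/S'(x^*))[\tilde\pi(x^*)-\bar\pi_\lambda(x^*)]$, comparing the $m'$-weighted mean of $\pi_\mu$ below $x^*$ with its $\varphi_\lambda m'$-weighted mean above $x^*$; the assumption $\mu>0$ enters through its effect on $S'$ (which is then decreasing) and hence on the speed measure $m'=2/(\sigma^2 S')$, biasing the lower average $\tilde\pi(x^*)$ upward enough to dominate $\bar\pi_\lambda(x^*)$ uniformly in $\lambda$. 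Once this is in place, the MLR argument gives $\partial_\lambda\bar\pi_\lambda(y^*_\lambda)<0$, hence $\partial_\lambda G_\lambda(y^*_\lambda)>0$, and the implicit function theorem concludes.
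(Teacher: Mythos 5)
Your reduction is fine as far as it goes: the representation $G_\lambda(x)=\frac{1}{S'(x)}\bigl[\tilde\pi(x)-\bar\pi_\lambda(x)\bigr]$ is correct and is essentially the same weighted-average characterization that appears in Proposition \ref{uniqueness of the equation}; the crossing property of $G_\lambda$ (positive before its unique root, negative after, by \eqref{G positiivinen ennen y^*}) is all the comparison needs, so you do not need --- and have not actually justified --- the strict bound $\partial_xG_\lambda(y^*_\lambda)<0$; and your likelihood-ratio observation is exactly the paper's inequality \eqref{phin tavallisten jarjestys}, obtained from $\varphi_\lambda(z)/\varphi_\lambda(x)=\mathbb{E}_z[e^{-\lambda\tau_x}]$ and the strong Markov property. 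The genuine gap is the step you flag yourself: the stochastic-dominance argument requires $\pi_\mu$ to be increasing on $(y^*_\lambda,\infty)$, i.e.\ $y^*_\lambda\ge x^*$, and this is neither proved nor a consequence of the standing assumptions --- all that is known is $y^*_\lambda\in(\tilde x,\hat x)$, an interval straddling $x^*$. Your proposed fix (a sign analysis of $G_\lambda(x^*)$ driven by $\mu>0$ through the scale density) cannot work: at $x^*$ both quantities being compared, $\tilde\pi(x^*)$ (the $m'$-average of $\pi_\mu$ over the decreasing branch) and $\bar\pi_\lambda(x^*)$ (the $\varphi_\lambda m'$-average over the increasing branch), exceed $\pi_\mu(x^*)$, and for small $\lambda$ the right-hand average can dominate whenever $\pi_\mu$ grows appreciably to the right of $x^*$ while staying moderate near $0$; then $G_\lambda(x^*)<0$ and $y^*_\lambda<x^*$. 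Positivity of the drift does not exclude this; in the paper $\mu>0$ plays a different role, namely to guarantee $\pi_\mu=\pi+\gamma\mu>0$ (see the remark following the proposition).

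The paper's own proof avoids monotonicity of $\pi_\mu$ altogether: it combines your ratio inequality with the Wronskian-type bound $\varphi_\lambda(x)\varphi_{\hat\lambda}'(x)-\varphi_{\hat\lambda}(x)\varphi_\lambda'(x)\le 0$ (Corollary 3.2 of \cite{Alvarez2004}, applied via $(\mathcal{A}-\lambda)\varphi_{\lambda+r}=r\varphi_{\lambda+r}$) to obtain the pointwise kernel comparison $\varphi_\lambda(z)/\varphi_\lambda'(x)\le\varphi_{\hat\lambda}(z)/\varphi_{\hat\lambda}'(x)$ for $z>x$, and then integrates against the nonnegative measure $\pi_\mu(z)m'(z)\,dz$, so only $\pi_\mu\ge 0$ is invoked, never its shape. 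To salvage your normalized-average route you would have to either prove $y^*_\lambda\ge x^*$ under additional hypotheses or replace stochastic dominance by an argument valid for $U$-shaped integrands; as written, the proposal is incomplete at its decisive step.
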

\begin{proof}
Let $\hat{\lambda}>\lambda$. From the proof of lemma \ref{limit phi}, we find that for every $x<z$
\begin{equation*}
    \frac{\varphi_{\lambda}(z)}{\varphi_{\lambda}(x)} \geq \frac{\varphi_{\hat{\lambda}}(z)}{\varphi_{\hat{\lambda}}(x)},
\end{equation*}
which is equivalent to
\begin{equation} \label{phin tavallisten jarjestys}
    \frac{\varphi_{\lambda}(z)}{\varphi_{\hat{\lambda}}(z)} \geq \frac{\varphi_{\lambda}(x)}{\varphi_{\hat{\lambda}}(x)}.
\end{equation}
Because $\hat{\lambda}>\lambda$, there exists  $r >0$ such that $\hat{\lambda} = \lambda + r$. Thus utilizing the fact that $(\mathcal{A}-\lambda)\varphi_{\lambda+r}= (\mathcal{A}-(\lambda+r))\varphi_{\lambda+r} + r\varphi_{\lambda+r} = r\varphi_{\lambda+r}$ with the Corollary 3.2 of \cite{Alvarez2004}, we have
\begin{equation*}
    \varphi_{\lambda}(x) \varphi_{\hat{\lambda}}'(x)  - \varphi_{\hat{\lambda}}(x) \varphi_{\lambda}'(x) = -r S'(x)  \int_{x}^{\infty} \varphi_{\lambda}(y)\varphi_{\lambda+r}(y)m'(y)dy \leq 0.
\end{equation*}
Reorganizing the above we get
\begin{equation} \label{phin tavallisista derivaattoihin}
    \frac{\varphi_{\lambda}(x)}{\varphi_{\hat{\lambda}}(x)} \geq \frac{\varphi_{\lambda}'(x) }{\varphi_{\hat{\lambda}}'(x)}. 
\end{equation}
Combining (\ref{phin tavallisten jarjestys}) and (\ref{phin tavallisista derivaattoihin}), we deduce that
\begin{equation*}
     \frac{\varphi_{\lambda}(z)}{\varphi_{\lambda}'(x)} \leq \frac{\varphi_{\hat{\lambda}}(z)}{\varphi_{\hat{\lambda}}'(x)}.
\end{equation*}
Hence, the function $G_{\lambda}(x)$ satisfies
\begin{align*}
     G_{\lambda}(x) = \int_{x}^{\infty} \frac{\varphi_{\lambda}(z)}{\varphi_{\lambda}'(x)} \pi_{\mu}(z) m'(z)dz + \frac{\int_0^x \pi_\mu(z)m'(z)dz}{S'(x)m(0,x)} \leq G_{\hat{\lambda}}(x).
\end{align*}
This implies that $y^*_{\lambda} \leq y^*_{\hat{\lambda}}$, as by (\ref{G positiivinen ennen y^*}), $G_{\lambda}(x)$ is positive in the interval $(0,y^*_{\lambda})$ and has unique root.
\end{proof}
\begin{remark}
The assumption that $\mu(x)>0$ is somewhat restricting and is there to guarantee that $\pi_{\mu}(x)>0$. It would be enough that $$\frac{L(y^*_{\lambda})}{\varphi_{\lambda}'(y^*_{\lambda})} \leq \frac{L(y^*_{\lambda})}{\varphi_{\hat{\lambda}}'(y^*_{\lambda})}.$$
It is often hard to show this exactly, however, in applications it can be verified numerically.
\end{remark}

\begin{remark}
Denote by $\beta_s$ the average cost per unit time of the singular problem. Then it also holds that $\beta \xrightarrow{\lambda \to \infty} \beta_s$ (see \cite{LempaSaarinen2019} p.12).
\end{remark}

\section{Illustrations}

\subsection{Verhulst-Pearl diffusion} We consider a standard Verhulst-Pearl diffusion
$$
dX_t = \mu X_t(1-\beta X_t)dt+\sigma X_t dW_t, \quad \, X_0 = x \in \mathbb{R}_r,
$$
where $\mu >0$, $\sigma >0$, $\beta > 0$. This diffusion is often used as a model for stochasticly fluctuating populations, see \cite{AlvarezHening2019, EvansHeningScheiber2015}. The scale density and speed measure are in this case
$$
S'(x) = x^{-\frac{2 \mu}{\sigma^2}} e^{\frac{2\mu \beta}{\sigma^2} x}, \quad m'(x) = \frac{2}{\sigma^2} x^{\frac{2 \mu}{\sigma^2}-2} e^{-\frac{2\mu \gamma}{\sigma^2} x}.
$$
We assume, that the cost $\pi(x) = x^2$ and $\gamma=1$. Hence, $\pi_{\mu}(x) = x^2 - x \mu (1-\beta x) $.
In this setting, we note that if $\mu > \sigma^2/2$ then
$$
m(0,x) = \frac{2}{\sigma^2}\bigg( \frac{\sigma^2}{2 \mu \beta}\bigg) ^{\frac{2 \mu}{\sigma^2}-1} \bigg( \bigg[ \Gamma \bigg( \frac{2\mu}{\sigma^2}-1 \bigg)- \Gamma \bigg( \frac{2\mu}{\sigma^2}-1,\frac{2 \mu \beta x}{\sigma^2} \bigg) \bigg] \bigg).
$$
The minimal excessive functions read as (see \cite{DayanikKaratzas2003} pp.201-203)
\begin{align*}
\varphi_{\lambda}(x) & = x^{\alpha_1} U \bigg(\alpha_1, 1+ \alpha_1 -\alpha_2, \frac{2 \mu \beta x}{\sigma^2}\bigg), \\ \quad \psi_{\lambda}(x) & =  x^{\alpha_1} M \bigg( \alpha_1, 1+\alpha_1-\alpha_2, \frac{2\mu \beta x}{\sigma^2} \bigg),
\end{align*}
where $U$ and $M$ are the Kummer's confluent hypergeometric functions of the second and first kind respectively, and
\begin{align*}
& \alpha_1 = \frac{1}{2}-\frac{\mu}{\sigma^2}+\sqrt{\bigg(\frac{1}{2}-\frac{\mu}{\sigma^2} \bigg)^2+\frac{2\lambda}{\sigma^2}}, \\
& \alpha_2 = \frac{1}{2}-\frac{\mu}{\sigma^2} - \sqrt{\bigg(\frac{1}{2}-\frac{\mu}{\sigma^2} \bigg)^2+\frac{2\lambda}{\sigma^2}}.
\end{align*}
We see that our assumptions are satisfied, and thus the result applies. Unfortunately, the equation (\ref{optimaalisuusyhtalo}) for the optimal threshold $y^*$, and the formula for the minimum average cost $\beta$ (\ref{beta above}), are complicated and therefore left unstated. However, we can illustrate the results numerically. In the table \ref{table1} the optimal threshold $y^*$ is calculated with the values $\mu = 1$, $\sigma=1$ and $\beta = 0.01$ for a few different values of $\lambda$. We see from the table that as $\lambda$ increases the threshold $y^*$ gets closer to the corresponding threshold $y^*_s \approx 0.743$ of the singular control problem (\ref{singular problem}). 

\begin{table} 
    \centering
    \begin{tabular}{c|c}
         $\lambda$ & $y^*$  \\ \hline 
         5 & 0.317 \\
         10 & 0.496 \\
         50 & 0.656 \\
         100 & 0.684 \\
         1000 & 0.726
    \end{tabular}
    \caption{The values for the optimal threshold $y^*$ for some values of the intensity $\lambda$.}
    \label{table1}
\end{table}

\subsection{Standard Ornstein-Uhlenbeck process}

As remarked in the introduction, the results hold also for $\mathbb{R}$ with straightforward changes. Indeed, we only have to adjust the assumptions slightly, by changing the lower boundary from $0$ to $-\infty$ in assumptions \ref{assumption 2}, \ref{assumption 3} and change all the formulas accordingly. With this change we can study a larger class of processes.

Consider dynamics that are characterized by a stochastic differential equation 
$$
dX_t = - \beta X_t dt + dW_t, \quad \, X_0=x,
$$
where $\beta > 0$. This diffusion is often used to model continuous time systems that have mean reverting behaviour. To illustrate the results we choose the running cost
$
\pi(x) = |x|,
$
and consequently $\pi_{\mu}(x) = |x|-\gamma \beta x$. The scale density and the density of speed measure are in this case
$$
S'(x)= \exp( \beta  x^2) , \quad
m'(x)=2 \exp(- \beta  x^2),
$$
and the minimal excessive functions read as (see \cite{BorodinSalminen} pp.141)
$$
\varphi_{\lambda}(x) = e^{\frac{\beta x^2}{2}} D_{-\lambda / \beta}(x \sqrt{2 \gamma}), \quad \, \psi_{\lambda}(x) = e^{\frac{\beta x^2}{2}} D_{-\lambda / \beta}(-x \sqrt{2 \gamma}),
$$ 
where $D_{\nu}(x)$ is a parabolic cylinder function. The equation (\ref{optimaalisuusyhtalo}) for the optimal threshold takes again rather complicated form and thus the results are only illustrated numerically in table \ref{table2}. In the singular control case the equation (\ref{singular problem}) gives $y^*_s \approx 0.535$. Thus, as expected, the threshold value $y^*$ gets closer to $y^*_s$ when $\lambda$ increases.

\begin{table} 
    \centering
    \begin{tabular}{c|c}
         $\lambda$ & $y^*$  \\ \hline 
         1 & 0.182 \\
         5 & 0.301 \\
         10 & 0.353 \\
         100 & 0.469\\
         300 & 0.496 \\
    \end{tabular}
    \caption{The value of the optimal threshold $y^*$ for controlled Ornstein-Uhlenbeck process for $\beta=0.1$ and few choices of $\lambda$.}
    \label{table2}
\end{table}

\section{Conclusions}

We considered ergodic singular control problems with constraint of a regular one-dimensional diffusion. Relying on basic results from the classical theory of linear diffusions, we characterized the state after which the decision maker should apply an impulse control to the process. Our results are in agreement with the findings of \cite{Alvarez2019}, where the corresponding unconstrained singular control problem is studied. Indeed, no second order or symmetry properties of the cost are needed. In addition, we proved that as the decision maker gets more frequent chances to exercise the control, the value of the problem converges to that of the singular problem. 

There are few directions that the constrained problems could be studied further. To the best of our knowledge, the finite horizon problem with constraint remains open, even for the case of Brownian motion. Thus, it would be interesting if a similar analysis as in \cite{Karatzas1983} could be extended to also cover this case. In this case, we would expect a similar connections between the finite horizon time and the present problem as for those without any constraints (see \cite{Karatzas1983} p.241).

Moreover, the related two-sided problem, where the decision maker could control both downwards and upwards, but only at jump times of Poisson process, could be studied. Unfortunately, these extension are outside the scope of the present study, and therefore, left for future research.

\subsection*{Acknowledgements} 
\thispagestyle{empty}

Two anonymous referees are acknowledged for helpful comments. We would like to gratefully acknowledge the emmy.network foundation under the aegis of the Fondation de Luxembourg, for its continued support.

\end{document}